\numberwithin{equation}{section}
\newtheorem{Theorem}{Theorem}[section]
\newtheorem{Corollary}[Theorem]{Corollary}
\newtheorem{Lemma}[Theorem]{Lemma}
\newtheorem{Proposition}[Theorem]{Proposition}
 { \theoremstyle{definition}
\newtheorem{Remark}[Theorem]{Remark} }
\newcommand{\bsfrac}[2]{%
\scalebox{-1}[1]{\nicefrac{\scalebox{-1}[1]{$#1$}}{\scalebox{-1}[1]{$#2$}}}%
}
\begin{document}

\allowdisplaybreaks

\newcommand{\arXivNumber}{2003.12820}

\renewcommand{\thefootnote}{}

\renewcommand{\PaperNumber}{081}

\FirstPageHeading

\ShortArticleName{Integral Structure for Simple Singularities}

\ArticleName{Integral Structure for Simple Singularities\footnote{This paper is a~contribution to the Special Issue on Primitive Forms and Related Topics in honor of Kyoji Saito for his 77th birthday. The full collection is available at \href{https://www.emis.de/journals/SIGMA/Saito.html}{https://www.emis.de/journals/SIGMA/Saito.html}}}

\Author{Todor MILANOV and Chenghan ZHA}
\AuthorNameForHeading{T.~Milanov and C.~Zha}

\Address{Kavli IPMU (WPI), UTIAS, The University of Tokyo, Kashiwa, Chiba 277-8583, Japan}
\Email{\href{mailto:todor.milanov@ipmu.jp }{todor.milanov@ipmu.jp}, \href{mailto:chenghan.zha@ipmu.jp}{chenghan.zha@ipmu.jp}}

\ArticleDates{Received May 27, 2020, in final form August 09, 2020; Published online August 22, 2020}

\Abstract{We compute the image of the Milnor lattice of an ADE singularity under a period map. We also prove that the Milnor lattice
can be identified with an appropriate relative $K$-group defined through the Berglund--H\"ubsch dual of the corresponding singularity.}

\Keywords{simple singularities; period map; mirror symmetry; topological K-theory}

\Classification{14D05; 32S30; 19L47}

\renewcommand{\thefootnote}{\arabic{footnote}}
\setcounter{footnote}{0}

\section{Introduction}\label{introduction}

Let $f\in \mathbb{C}[x_1,x_2,x_3]$ be a weighted homogeneous polynomial representing
the germ of a simple singularity of type~A,~D, or~E. Let $f^T\in
\mathbb{C}[x_1,x_2,x_3]$ be the corresponding Berglund--H\"ubsch dual of $f$
(see Section~\ref{sec:K-Mil}). Fan--Jarvis--Ruan proved in~\cite{MR3043578}, using also
results of Givental--Milanov~\cite{MR2103007} and Frenkel--Givental--Milanov~\cite{MR2734560}, that the generating function of Fan--Jarvis--Ruan--Witten (FJRW)
invariants of $f^T$ can be identified with a tau-function of a
specific Kac--Wakimoto hierarchy. The identification however involves
rescaling the dynamical variables of the Kac--Wakimoto hierarchy and
the precise values of the rescaling constants were left unknown. One
application of the results in this papers is to obtain explicit
formulas for the rescaling constants. Such an explicit
identification is needed if one is interested in constructing
a matrix model for the FJRW invariants of $f^T$, similar to
the Kontsevich's matrix model in \cite{MR1171758}. We are not going to
compute the rescaling coefficients in this paper. The computation is
straightforward and it should probably be done only when needed. Let
us try to explain instead why this small technical detail leads to a very
interesting problem in singularity theory.

Let us recall that for any singularity $f$ there is a natural way to
construct a semi-simple Frobenius structure on the space of miniversal
deformations of $f$ (see \cite{MR1924259}). The construction depends on the choice of a
{\em primitive form} in the sense of Saito \cite{Sa} and it
essentially coincides with what Saito called {\em flat structure}. On
the other hand, motivated by Gromov--Witten theory, Givental
introduced the notion of a {\em total descendent potential} for every
semi-simple Frobenius manifold (see \cite{MR1901075, MR1866444}). Givental
conjectured \cite{MR1866444} and Teleman proved \cite{MR2917177} that if the
Frobenius structure corresponding to the quantum cohomology of a
compact K\"ahler manifold $X$ is semi-simple, then his definition
coincides with the generating function of Gromov--Witten invariants of
$X$. Let us return to our settings, i.e., the case of a simple
weighted homogeneous singularity $f$ on 3 variables. The standard
holomorphic volume form
$dx_1\wedge dx_2\wedge dx_3$ is primitive. Therefore, following
Givental, we can define total descendent potential. The latter will be
called, the total descendent potential of $f$. Fan--Jarvis--Ruan
proved in \cite{MR3043578} that the generating function of FJRW invariants
of $f^T$ coincides with the {\em total descendant potential} of
$f$. Furthermore, Givental--Milanov \cite{MR2103007} and
Frenkel--Givental--Milanov
\cite{MR2734560} proved that the total descendant potential of $f$
is a tau-function of the principal Kac--Wakimoto
hierarchy of the same type A, D, or E as the singularity $f$. Finally,
the outcome of the above work is that the generating function of FJRW
invariants of $f^T$ is a tau-function of an appropriate Kac--Wakimoto
hierarchy. However, there is still a small gap in this statement. Namely,
while the state space of FJRW theory is identified
explicitly with the Milnor ring of the singularity (see \cite{MR3043578}),
the identification of the Milnor ring and the Cartan subalgebra of the
corresponding simple Lie algebra is given by a period map and it is not
explicit. In order to obtain an explicit identification, we need to
determine the image of the root lattice in the Milnor ring of the singularity. This is
exactly the problem that we want to solve in this paper.

\subsection{Simple singularities}
Let us give a precise statement of the problem that we want to solve.
Let $f(x_1,x_2,x_3)=g(x_1,x_2) + x_3^2$, where $g$ is one of the
polynomials listed in the following table:
\begin{center}
\begin{tabular}{c|ccccc}
 Type & $A_N$ & $D_N$ & $E_6$ & $E_7$ & $E_8$ \\
 \hline
 $g$&
 $x_1^{N+1}+ x_2^2 $ &
 $x_1^2x_2+x_2^{N-1}$ &
 $x_1^3+x_2^4$ & $x_1^3+x_1 x_2^3$ & $x_1^3+x_2^5$\tsep{3pt}
\end{tabular}
\end{center}
The polynomial $f$ represents the germ of a simple singularity at
$x=0$. Let
\[
H_f:=\mathbb{C}[x_1,x_2,x_3]/(f_{x_1},f_{x_2},f_{x_3})
\]
be the {\em Milnor ring} of $f$, where $f_{x_i}:=\tfrac{\partial f}{\partial
 x_i}$. Let us denote by $(\ ,\ )$ the {\em residue pairing} on $H_f$
corresponding to the standard volume form $\omega=dx_1\wedge
dx_2\wedge dx_3$, that is,
\begin{gather*}
(\phi_1(x),\phi_2(x)) : =
\operatorname{Res}_{x=0} \frac{
 \phi_1(x)\phi_2(x) \omega }{
 f_{x_1}f_{x_2}f_{x_3}} .
\end{gather*}
The hypersurfaces $V_\lambda=\big\{x\in \mathbb{C}^3\, |\, f(x)=\lambda\big\}$ for
$\lambda\neq 0$ are non-singular and their union has a~structure of a~smooth fibration on $\mathbb{C}\setminus{\{0\}}$ known as the {\em Milnor
 fibration}. Let us fix a reference point $\lambda=1$ and consider
the middle homology group $H_2(V_1;\mathbb{Z})$, known also as the {\em
 Milnor lattice}. Our interest is in the period vectors
$I^{(-1)}_\alpha(\lambda)\in H_f$ defined by
\begin{gather*}
\big(I^{(-1)}_\alpha(\lambda),\phi_i\big):= \frac{1}{2\pi}
\int_{\alpha_\lambda} \phi_i(x)\frac{\omega}{df},
\end{gather*}
where $\alpha\in H_2(V_1;\mathbb{C})$, $\phi_i(x)$ ($1\leq i\leq N$) is a set of
polynomials representing a basis of $H_f$,
$\alpha_\lambda\in H_2(V_\lambda;\mathbb{C})$ is obtained from $\alpha$ via a
parallel transport along some reference path, and $\tfrac{\omega}{df}$
is the so-called {\em Gelfand--Leray} form (see
\cite{MR966191}). Alternatively, we can view each period vector as a~multivalued analytic function $I_\alpha^{(-1)}\colon \mathbb{C}\setminus{\{0\}}\to H_f$.

Let us assign degree $c_i\in \mathbb{Q}_{>0}$ to $x_i$ ($1\leq i\leq 3$), such that, the
polynomial $f$ has degree 1. Then the Milnor ring becomes a graded
ring. The highest possible degree of a homogeneous element in $H_f$ is $D=\sum\limits_{i=1}^3
(1-2c_i)= 1-\tfrac{2}{h}$, where $h$
is the Coxeter number of the corresponding root system. Put
$\theta:=\frac{D}{2}-\operatorname{deg}$, where
$\operatorname{deg}\colon H_f\to H_f$ is the linear operator uniquely
determined by the following condition: if $\phi$ is a weighted
homogeneous element of degree $d$, then $\deg(\phi)=d \phi$. For
homogeneity reasons, the period vectors have the form
\begin{gather}\label{psi-def}
I^{(-1)}_\alpha(\lambda) =
\frac{\lambda^{\theta+1/2} }{\Gamma(\theta+3/2)} \Psi(\alpha),
\end{gather}
where $\Psi\colon H_2(V_1;\mathbb{C})\to H_f$ is a linear isomorphism. Our goal is
to compute the image of the Milnor lattice $H_2(V_1;\mathbb{Z})$ via the map
$\Psi$. The solution to this problem is given in Section
\ref{sec:imML}. Explicit formulas for the image of the Milnor lattice via the map
$\Psi$ are given in Sections \ref{sec:a}--\ref{sec:e8}. The main
feature of our answer is that it involves various $\Gamma$-constants
and roots of unity. The second goal of our paper is to show that
although the formulas look cumbersome, in fact there is an interesting
structure behind them.

\subsection{K-theoretic interpretation of the Milnor lattice}
\label{sec:K-Mil}

It turns out that our answer can be stated quite elegantly via relative
K-theory. The idea to look for such a description comes from the
work of Iritani \cite{MR2553377}, Chiodo--Iritani--Ruan \cite{MR3210178}, and
Chiodo--Nagel \cite{MR3888782}. More precisely, Iritani was able to prove in \cite{MR2553377}
that the Milnor lattice of the mirror of a Fano toric orbifold $X$ can be
identified with the topological K-ring $K^0(X)$. The identification uses a period
map to embed the Milnor lattice in $H^*(X;\mathbb{C})$ and a certain $\Gamma$-class
modification of the Chern character map to embed $K^0(X)$ in
$H^*(X;\mathbb{C})$. The lattice in $H^*(X;\mathbb{C})$, obtained either as the image
of the Milnor lattice via the period map or as the image of $K^0(X)$
via the $\Gamma$-class
modification of the Chern character map, is known as $\Gamma$-integral
structure in quantum cohomology. Isolated singularities are almost never mirror models of
a manifold. Nevertheless, Chiodo--Iritani--Ruan have proposed an
analogue of the $\Gamma$-integral structure for singularities of
Fermat type. The analogue of $H^*(X;\mathbb{C})$ is played by the Milnor
ring~$H_f$, while $K^0(X)$ is replaced with an appropriate category of
equivariant matrix factorizations of $f$. Finally,
Chiodo--Nagel were able to find an isomorphism between $H_f$ and
an appropriate relative orbifold cohomology group. Since, the Chern
character gives an isomorphism between cohomology and K-theory and
the Grothendieck group of the category of matrix factorization also has
the flavor of a topological K-ring, after expecting more carefully the
constructions in~\cite{MR3210178} and~\cite{MR3888782}, we see that there is a~natural
candidate for a $\Gamma$-integral structure for Fermat type singularities.
After several trial and errors we were able to find the
correct topological $K$-ring and the correct modification of the Chern
character map. Moreover our proposal makes sense not only for Fermat
type polynomials, but more generally for an arbitrary {\em invertible
 polynomial}. Nevertheless, let us return to our current settings of simple
singularities. We believe that our results can be generalized to all
invertible polynomials, but that would require some additional work.

The polynomials $f$ corresponding to a simple singularity are
invertible polynomials in the sense of \cite{MR1214325} (see also \cite{MR2801653}). Each polynomial is
uniquely determined by a $3\times 3$ matrix $A=(a_{ij})_{1\leq i,j\leq
 3}$ with non-negative integer coefficients, such that,
\begin{gather*}
f(x) = \sum_{i=1}^3 x_1^{a_{i1}} x_2^{a_{i2}} x_3^{a_{i3}}.
\end{gather*}
Following Fan--Jarvis--Ruan (see \cite{MR3043578}) we consider also the
Berglund--H\"ubsch dual polynomial
\begin{gather*}
f^T(x) = \sum_{i=1}^3 x_1^{a_{1i}} x_2^{a_{2i}} x_3^{a_{3i}}.
\end{gather*}
Let $G^T$ be the group of diagonal symmetries of $f^T$, that is,
\begin{gather*}
G^T:=\big\{t\in (\mathbb{C}^*)^3\, |\, t_1^{a_{1i}} t_2^{a_{2i}}
t_3^{a_{3i}}=1\ \forall\, i \big\}.
\end{gather*}
Let $a^{ij}$ $(1\leq i,j\leq 3)$ be the entries of the inverse matrix
$A^{-1}$. The group $G^T$ is generated by the following elements
\begin{gather*}
\overline{\rho}_i = \big(e^{2\pi\mathbf{i} a^{i1}}, e^{2\pi\mathbf{i} a^{i2}},
e^{2\pi\mathbf{i} a^{i3}}\big), \qquad 1\leq i\leq 3.
\end{gather*}
Finally, let $V^T_1=\big\{x\in \mathbb{C}^3\, |\, f^T(x)=1\big\}$. Our main interest is
in the topological relative K-theoretic orbifold group
\begin{gather*}
K^0_{\rm orb}\big(\big[\mathbb{C}^3/G^T\big],\big[V^T_1/G^T\big]\big):=K^0_{G^T}\big(\mathbb{C}^3,V^T_1\big).
\end{gather*}
In general, there is no satisfactory definition of K-theory for
non-compact spaces. However, in our case the pair $\big(\mathbb{C}^3,V^T_1\big)$ is
$G^T$-equivariantly homotopic to a pair of finite $CW$ complexes, so
we may think of $\big(\mathbb{C}^3,V^T_1\big)$ as a $G^T$-equivariant pair of finite
$CW$-complexes. We refer to~\cite{MR234452} for some background on
equivariant topological K-theory.

Motivated by Iritani's $\Gamma$-integral structure in quantum cohomology (see \cite{MR2553377}), we
will now construct a linear map
\begin{gather}\label{ch-Gamma}
\operatorname{ch}_\Gamma\colon \
\xymatrix{
K^0_{\rm orb}\big(\big[\mathbb{C}^3/G^T\big],\big[V^T_1/G^T\big]\big)\otimes \mathbb{C} \ar[r] &
H_{\rm orb}\big(\big[\mathbb{C}^3/G^T\big],\big[V^T_1/G^T\big];\mathbb{C}\big),}
\end{gather}
which is a certain $\Gamma$-class modification of the orbifold Chern character map.
For a $G^T$-equivariant space $X$ and $g\in G^T$, let us denote
by $\operatorname{Fix}_g(X):=\{x\in X\, |\, gx=x\}$ the set of fixed
points. The elements in the relative $K$-group will be identified with
isomorphism classes $[E\to F]$ of two-term complexes $\xymatrix{E\ar[r]^{d} & F}$ of
$G^T$-equivariant vector bundles, such that, the differential~$d$ is a
morphism of $G^T$-equivariant vector bundles and
$d|_{V^T_1}\colon E_{V_1^T}\to F_{V_1^T}$ is an
isomorphism. Note that for $g\in G^T$, the
restriction of a vector bundle $E|_{\operatorname{Fix}_g(\mathbb{C}^3)}$
decomposes as a direct sum of eigen-subbundles $E_\zeta$ and that the
restriction to $\operatorname{Fix}_g\big(\mathbb{C}^3\big) $ of every two
term complex $\xymatrix{E\ar[r]^{d} & F}$ decomposes as a direct sum
of two term subcomplexes
$\xymatrix{E_\zeta\ar[r]^{d_\zeta} & F_\zeta}$, where
$d_\zeta=d|_{E_\zeta}$. We have the following well known
decomposition (e.g., see \cite[Theorem 2]{MR1076708}):
\begin{gather*}
\operatorname{Tr}\colon \
\xymatrix{
 K^0_{G^T}\big(\mathbb{C}^3,V^T_1\big)\otimes \mathbb{C} \ar[r]^-{\cong} &
 \bigoplus_{g\in G^T}
\big[ K^0\big(\operatorname{Fix}_g\big(\mathbb{C}^3\big) , \operatorname{Fix}_g\big(V^T_1\big)\big)
\otimes \mathbb{C} \big]^{G^T}, }
\end{gather*}
where $[\ ]^{G^T}$ denotes the $G^T$-invariant part and the morphism
$\operatorname{Tr}$ is defined by
\begin{gather*}
\operatorname{Tr}([E\to F]) =\bigoplus_{g\in G^T} \bigoplus_{\zeta\in \mathbb{C}^*}
\zeta [E_\zeta\to F_\zeta].
\end{gather*}
\begin{Remark}
The above decomposition is proved in~\cite{MR1076708} in the case of absolute
K-theory. However, using the long exact sequence of a pair, it is
straightforward to extend the result to relative K-theory as well.
\end{Remark}
The standard Chern character map gives an isomorphism
\begin{gather*}
\operatorname{ch} \colon \
\xymatrix{
K^0\big(\operatorname{Fix}_g\big(\mathbb{C}^3\big) , \operatorname{Fix}_g\big(V^T_1\big)\big) \otimes
\mathbb{C}
\ar[r] &
H^{\rm ev}\big(\operatorname{Fix}_g\big(\mathbb{C}^3\big) , \operatorname{Fix}_g\big(V^T_1\big);\mathbb{C}\big).}
\end{gather*}
Finally, if $G$ is a finite group acting on a smooth manifold $M$,
such that the quotient groupoid $[M/G]$ is an effective orbifold,
then $H^*(M/G;\mathbb{C})\cong [H^*(M;\mathbb{C})]^G$. Indeed, for a finite group
$G$ the operation taking $G$-invariants
is an exact functor from the category of $G$-vector spaces to the
category of vector spaces. Therefore
\begin{gather*}
H^i(M/G;\mathbb{C})\cong H^i\big([\Gamma(M,\mathcal{A}^*_M)]^G\big) =
[H^i(M,\mathcal{A}^*_M)]^G \cong [H^i(M;\mathbb{C})]^G,
\end{gather*}
where $\mathcal{A}^*_M$ is the sheaf of smooth differential forms on $M$ with
complex coefficients, the first isomorphism is Satake's de Rham theorem
for orbifolds (see~\cite{MR79769}), and the last one is the de Rham's
theorem for the manifold $M$. Using the long exact sequence of a pair,
we get also that $H^i(M/G,N/G;\mathbb{C})\cong [H^i(M,N;\mathbb{C})]^G$ for any
$G$-invariant submanifold $N\subset M$. On the other hand, by definition,
\begin{gather*}
H^*_{\rm orb} \big(\big[\mathbb{C}^3/G^T\big],\big[V_1^T/G^T\big];k\big) = \bigoplus_{g\in G^T}\!
H^*\big(\operatorname{Fix}_g\big(\mathbb{C}^3\big)/G^T,
\operatorname{Fix}_g\big(V_1^T\big)/G^T;k\big),
\qquad\!\!\!\! k=\mathbb{Q}, \mathbb{R}, \mathbb{C}.
\end{gather*}
Therefore, the composition
$\widetilde{\operatorname{ch}}:=\operatorname{ch}\circ
\operatorname{Tr}$ defines a ring homomorphism
\begin{gather*}
\widetilde{\operatorname{ch}}\colon \
\xymatrix{
 K^0_{\rm orb} \big(\big[\mathbb{C}^3/G^T\big],\big[V_1^T/G^T\big]\big)\otimes \mathbb{C} \ar[r] &
 H^{\rm ev}_{\rm orb} \big(\big[\mathbb{C}^3/G^T\big],\big[V_1^T/G^T\big];\mathbb{C}\big), }
\end{gather*}
which is the orbifold version of the Chern character map. Clearly
$\widetilde{\operatorname{ch}}$ is an isomorphism over $\mathbb{C}$.
\begin{Remark}
Orbifold cohomology $H^*_{\rm orb}$ has two natural gradings --
standard topological degree grading coming from the topological space underlying the orbit
space and Chen--Ruan grading. In this paper we work with the
topological grading and the topological cup product.
\end{Remark}
Let us recall also the definition of the $\Gamma$-class. If $E\in
K^0_{\rm orb}\big(\big[\mathbb{C}^3/G^T\big]\big):=K^0_{G^T}\big(\mathbb{C}^3\big)$ is an orbifold vector
bundle and $\operatorname{Tr}(E) = \sum_g\sum_{\zeta} \zeta E_\zeta$,
then each eigenvalue $\zeta=e^{2\pi\mathbf{i} \alpha}$, where $0\leq \alpha<1$ is a
rational number and we define
\begin{gather*}
\widehat{\Gamma}(E) = \sum_g \prod_{\zeta=e^{2\pi\mathbf{i}\alpha}}
\prod_{i=1}^{\operatorname{rk}(E_\zeta)}
\Gamma(1-\alpha + \delta_{\zeta,i})
 \in
H^{\rm ev}_{\rm orb} \big(\big[\mathbb{C}^3/G^T\big]\big),
\end{gather*}
where $\delta_{\zeta,i}$ ($1\leq i\leq \operatorname{rk}(E_\zeta)$)
are the Chern roots of the vector bundle $E_\zeta$. If
$E=\big[T\mathbb{C}^3/G^T\big]$ is the orbifold tangent bundle, then the
$\Gamma$-class is denoted by $\widehat{\Gamma}\big(\big[\mathbb{C}^3/G^T\big]\big) $.
The map~\eqref{ch-Gamma} is defined by the following formula:
\begin{gather*}
\operatorname{ch}_\Gamma([E\to F]):=
\frac{1}{2\pi} \widehat{\Gamma}\big(\big[\mathbb{C}^3/G^T\big]\big) \cup
(2\pi\mathbf{i})^{\operatorname{deg}_\mathbb{C}}
\iota^*\widetilde{\operatorname{ch}} ([E\to F]),
\end{gather*}
where
$\operatorname{deg}_\mathbb{C}(\phi)=i \phi$ for
$\phi\in H^{2i}_{\rm orb}\big(\big[\mathbb{C}^3/G^T\big],\big[V_1^T/G^T\big];\mathbb{C}\big)$ and $\iota^*$
is an involution in orbifold cohomology that exchanges the direct
summands corresponding to $g$ and $g^{-1}$. Note that the definition
of $\iota^*$ makes sense because
$\operatorname{Fix}_g=\operatorname{Fix}_{g^{-1}}$.
\begin{Theorem}\label{t1}
 There exists a linear isomorphism
 \begin{gather*}
 \operatorname{mir}\colon \ \xymatrix{H_f\ar[r] &
 H^*_{\rm orb} \big(\big[\mathbb{C}^3/G^T\big],\big[V_1^T/G^T\big];\mathbb{C}\big),}
 \end{gather*}
 such that, the map
 \begin{gather*}
 \operatorname{mir}^{-1}\circ \operatorname{ch}_\Gamma\colon \
 \xymatrix{K^0_{\rm orb}\big(\big[\mathbb{C}^3/G^T\big],\big[V_1^T/G^T\big]\big)\ar[r]^-{\cong} &
 \Psi\big(H_2\big(f^{-1}(1);\mathbb{Z}\big)\big)}
 \end{gather*}
 is an isomorphism of Abelian groups.
\end{Theorem}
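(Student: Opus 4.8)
The overall strategy is to reduce the statement to the comparison of two full lattices, one in $H_f$ and one in $H^*_{\rm orb}\big(\big[\mathbb C^3/G^T\big],\big[V_1^T/G^T\big];\mathbb C\big)$, and then to build $\operatorname{mir}$ by matching them. By \eqref{psi-def} the map $\Psi$ is a linear isomorphism, so $\Psi\big(H_2(V_1;\mathbb Z)\big)$ is a free abelian group of rank $N=\dim_{\mathbb C}H_f$ spanning $H_f$, and Section~\ref{sec:imML} together with Sections~\ref{sec:a}--\ref{sec:e8} already give an explicit $\mathbb Z$-basis of it expressed in the monomial basis of $H_f$, with coefficients built from $\Gamma$-values and roots of unity. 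On the other side, $\operatorname{ch}_\Gamma$ is the composition of the $\mathbb C$-linear isomorphism $\widetilde{\operatorname{ch}}$ with the invertible operators $\cup\,\widehat\Gamma$, $(2\pi\mathbf i)^{\operatorname{deg}_{\mathbb C}}$, $\iota^*$ and the scalar $\tfrac{1}{2\pi}$, hence is itself a $\mathbb C$-linear isomorphism $K^0_{\rm orb}\otimes\mathbb C\to H^*_{\rm orb}$. Consequently, once we know that the abelian group $K^0_{\rm orb}=K^0_{G^T}\big(\mathbb C^3,V_1^T\big)$ is free of rank $N$, its image $\operatorname{ch}_\Gamma\big(K^0_{\rm orb}\big)$ is a full lattice in $H^*_{\rm orb}$ and $\operatorname{ch}_\Gamma$ is injective on $K^0_{\rm orb}$, so any $\mathbb C$-linear isomorphism $\operatorname{mir}\colon H_f\to H^*_{\rm orb}$ taking a $\mathbb Z$-basis of $\Psi\big(H_2(V_1;\mathbb Z)\big)$ to a $\mathbb Z$-basis of $\operatorname{ch}_\Gamma\big(K^0_{\rm orb}\big)$ makes $\operatorname{mir}^{-1}\circ\operatorname{ch}_\Gamma$ an isomorphism of abelian groups onto $\Psi\big(H_2(V_1;\mathbb Z)\big)$. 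Thus the theorem follows from two claims: (i) $\dim_{\mathbb C}H^*_{\rm orb}=N$, and (ii) $K^0_{\rm orb}$ is torsion-free.

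For (i) I would use the $\operatorname{Tr}$ decomposition and Satake's de Rham theorem recalled above to write $H^*_{\rm orb}\cong\bigoplus_{g\in G^T}\big[H^{\rm ev}\big(\operatorname{Fix}_g\big(\mathbb C^3\big),\operatorname{Fix}_g\big(V_1^T\big);\mathbb C\big)\big]^{G^T}$. Each $\operatorname{Fix}_g\big(\mathbb C^3\big)$ is a coordinate subspace $\mathbb C^{I_g}$ with $I_g=\{i:g_i=1\}$, and $\operatorname{Fix}_g\big(V_1^T\big)=\big\{f^T|_{\mathbb C^{I_g}}=1\big\}$; when this restriction is a nonzero polynomial it is again an invertible polynomial with an isolated singularity in the variables $I_g$, so the pair has relative cohomology concentrated in top degree with rank the Milnor number of the restriction and an explicit monomial basis, while when the restriction vanishes identically the pair contributes a single class. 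Taking $G^T$-invariants and adding up, a direct case-by-case count matches the total with $N$; this is compatible with, and can be deduced from, the Chiodo--Nagel identification of $H_f$ with a relative orbifold cohomology group~\cite{MR3888782}. Along the way one records the resulting bijection between this monomial-type basis of $H^*_{\rm orb}$ and the monomial basis of $H_f$ — essentially the Berglund--H\"ubsch--Krawitz mirror correspondence — which is a natural candidate for the map $\operatorname{mir}$.

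For (ii) I would exhibit an explicit $\mathbb Z$-basis of $K^0_{\rm orb}$ adapted to the same decomposition. Since $\mathbb C^3$ is $G^T$-equivariantly contractible, each $g$-summand is detected by a relative class supported on $\operatorname{Fix}_g\big(\mathbb C^3\big)$; concretely these are built from the Koszul complexes of the coordinate subspaces $\mathbb C^{I_g}$, twisted by characters of $G^T$ so as to be $G^T$-equivariant and to land in the $g$-sector, and, when $\operatorname{Fix}_g\big(V_1^T\big)$ is non-empty, combined with the two-term complex given by multiplication by $f^T-1$ so that the differential is an isomorphism over $V_1^T$. One checks that these are integral classes which generate, and — by the rank count from (i) — form a $\mathbb Z$-basis, so $K^0_{\rm orb}$ is free of rank $N$. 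As a bonus, computing $\widetilde{\operatorname{ch}}$ of such a twisted Koszul complex (a product of factors $1-\zeta^{-1}$ over the normal directions, placed in the appropriate sector) and then applying $\cup\,\widehat\Gamma$, $(2\pi\mathbf i)^{\operatorname{deg}_{\mathbb C}}$ and $\iota^*$ reproduces precisely the $\Gamma$-constants and roots of unity appearing in the formulas of Sections~\ref{sec:a}--\ref{sec:e8}; this lets one take $\operatorname{mir}$ to be the mirror correspondence of the previous paragraph and still have $\operatorname{mir}^{-1}\circ\operatorname{ch}_\Gamma$ land on $\Psi\big(H_2(V_1;\mathbb Z)\big)$ after at most a unimodular change of basis.

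The main obstacle is step (ii): producing an honest \emph{integral} basis of the equivariant relative $K$-group, rather than merely a rational or complex spanning set, and — if one wants the sharper statement with a preferred $\operatorname{mir}$ — verifying that the $\Gamma$-twisted Chern character of that basis spans the Milnor lattice exactly, and not a finite-index overlattice or sublattice. This is where the arithmetic of the $\Gamma$-values and the roots of unity has to be controlled, and where the two non-Fermat cases $D_N$ and $E_7$ require noticeably more bookkeeping than the Fermat cases $A_N$, $E_6$, $E_8$.
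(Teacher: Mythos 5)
Your reduction of the literal statement to two claims --- (i) $\dim_{\mathbb C}H^*_{\rm orb}=N$ and (ii) $K^0_{G^T}\big(\mathbb C^3,V_1^T\big)$ is torsion-free, hence free of rank $N$ --- is valid, and it correctly exposes that the theorem as stated (mere existence of \emph{some} linear isomorphism $\operatorname{mir}$) is weaker than what the paper actually establishes, namely an explicit monomial-to-sector $\operatorname{mir}$ defined over $\mathbb Q$ matching the two lattices generator by generator. But your claim (ii) is precisely where the paper spends almost all of Section~\ref{sec:K}, and your sketch of it does not go through. First, a concrete error: a two-term complex with differential given by multiplication by $f^T-1$ is \emph{not} a relative class for the pair $\big(\mathbb C^3,V_1^T\big)$, because $f^T-1$ vanishes identically on $V_1^T$, so its restriction there is the zero map rather than an isomorphism. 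The classes the paper uses are Koszul complexes of sections whose common zero locus is disjoint from $V_1^T$: the partial derivatives $\big(f_{x_1},f_{x_2},f_{x_3}\big)$ in the Fermat cases, and monomial sections such as $(x_1,x_3)$, $\big(x_1^2,x_3\big)$ twisted by $L_2^{i-1}$ in the $D_N$ case.

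Second, and more importantly, the step ``one checks that these are integral classes which generate, and --- by the rank count from (i) --- form a $\mathbb Z$-basis'' is exactly the missing argument: $N$ independent integral classes in a rank-$N$ group a priori generate only a finite-index subgroup, and ruling out a nontrivial index is the hard part. In the Fermat cases the paper needs Lemma~\ref{le:fermat-tor} (torsion-freeness of $K^{-1}_{G^T}\big(V_1^T\big)$, proved via the branched covering $M\to\mathbb C$ and the pair $(M,M\setminus R)$) to embed $K^0_{G^T}\big(\mathbb C^3,V^T_1\big)$ into the representation ring and identify it with the ideal \eqref{fermat-K}. In the $D_N$ and $E_7$ cases the relative $K$-group does \emph{not} inject into $K^0_{G^T}\big(\mathbb C^3\big)$: there is a genuine contribution $K^{-1}_A(M)\cong\mathbb Z$ entering through the connecting morphism, and the corresponding extra generator ($E_N^\bullet$, resp.\ $E_7^\bullet$) arises from the map $M\to\mathbb C^*$, $(x_1,x_2)\mapsto x_1^2$ (resp.\ $x_2^3$), detected via the suspension isomorphism of Corollary~\ref{cor:susp} and the excision/Thom analysis of $(M,M\setminus R)$ in Proposition~\ref{le:rel_K-D}. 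None of this structure is anticipated in your proposal, so while the strategy is sound in outline, the proof of the theorem --- even in its weak existential form --- is not yet there.
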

Unfortunately we do not have a conceptual definition of the map
$\operatorname{mir}$. Our definition is on a case by case basis. We
expect that $H^*_{\rm orb} \big(\big[\mathbb{C}^3/G^T\big],\big[V_1^T/G^T\big];\mathbb{C}\big)$ has a~natural identification with the state space of FJRW-theory under which
$\operatorname{mir}$ is identified with the mirror map of
Fan--Jarvis--Ruan (see~\cite{MR3043578}). Let us point out also that in all cases the
following two properties are satisfied:
\begin{enumerate}\itemsep=0pt
\item
 If $x_1^{m_1}x_2^{m_2}x_3^{m_3}$ is a
 homogeneous monomial representing a vector in $H_f$, then its image
 under $\operatorname{mir}$ is in the twisted sector
 corresponding to
 $g=\overline{\rho}_1^{\, m_1+1}\overline{\rho}_2^{\,
 m_2+1}\overline{\rho}_3^{\, m_3+1}$.
\item
 The map $\operatorname{mir}$ is defined over $\mathbb{Q}$, that is,
 $\operatorname{mir}$ provides an isomorphism
 \begin{gather*}
 \mathbb{Q}[x_1,x_2,x_3]/(f_{x_1},f_{x_2},f_{x_3})\cong
 H^*_{\rm orb}\big(\big[\mathbb{C}^3/G^T\big],\big[V^T_1/G^T\big];\mathbb{Q}\big).
 \end{gather*}
\end{enumerate}

\section{Period map image of the Milnor lattice}\label{sec:imML}

\subsection{Suspension isomorphism in vanishing homology}

We will reduce the problem of computing periods of the hypersurface~$V_\lambda$ to computing periods of the Riemann surfaces
\begin{gather*}
M_\mu:=\big\{(x_1,x_2)\in \mathbb{C}^2\, |\, g(x_1,x_2)=\mu\big\}.
\end{gather*}
Consider the map $V_\lambda\to \mathbb{C}$, $(x_1,x_2,x_3)\mapsto
g(x_1,x_2)$. The fibers of this map are given by
\begin{gather*}
V_{\lambda,\mu} := M_\mu \times \big\{{-}\sqrt{\lambda-\mu}, \sqrt{\lambda-\mu}\big\}.
\end{gather*}
Suppose now that $A\in H_1(M_\lambda;\mathbb{Z})$ is any cycle. The
following two maps
\begin{gather*}
\phi_\pm\colon \ A\times [0,1] \to V_\lambda,\qquad
(x_1,x_2,t)\mapsto
\big(t^{c_1} x_1, t^{c_2} x_2, \pm \sqrt{\lambda (1-t)} \big)
\end{gather*}
have images that fit together and give a two-dimensional cycle
$\alpha\in V_\lambda$, that is,
$\alpha=\Sigma A$ is the suspension of the cycle $A$. It is
known that the above suspension operation $\Sigma\colon
H_1(M_\lambda;\mathbb{Z})\to H_2(V_\lambda;\mathbb{Z})$ is an isomorphism (see
\cite[Theorem 2.9]{MR966191}).

Note that we may choose the basis of $H_f$ to be such that
$\phi_i=\phi_i(x_1,x_2)$ does not depend on~$x_3$. Then the integral
\begin{gather*}
\frac{1}{2\pi}\int_{\alpha_\lambda} \phi_i \frac{\omega}{df}
=\frac{1}{2\pi} \partial_\lambda \int_{\alpha_\lambda} d^{-1}(\phi_i
\omega) =
\frac{1}{2\pi} \partial_\lambda \int_{\alpha_\lambda}
x_3\phi_i(x_1,x_2) dx_1\wedge dx_2,
\end{gather*}
where in the first equality we used the Stoke's theorem (see
\cite[Lemma~7.2]{MR966191}).
Using Fubini's theorem (see \cite[Lemma~7.2]{MR966191}), we have
\begin{gather*}
\int_{\alpha_\lambda} x_3\phi_i(x_1,x_2) dx_1\wedge dx_2 =
\int_0^\lambda (\lambda-\mu)^{1/2} \int_{A_\mu} \frac{\phi_i(x_1,x_2)
 dx_1 dx_2}{dg} d\mu \\
\hphantom{\int_{\alpha_\lambda} x_3\phi_i(x_1,x_2) dx_1\wedge dx_2 =}{}
-\int_\lambda^0 (-(\lambda-\mu)^{1/2}) \int_{A_\mu} \frac{\phi_i(x_1,x_2)
 dx_1 dx_2}{dg} d\mu,
\end{gather*}
where the first integral represents integrating over
$\phi_+(A\times[0,1])$, the second one over
$\phi_-(A\times[0,1])$, and $A_\mu\in H_1(M_\mu)$ for
$\mu=\lambda t$ is obtained from $A$ via the rescaling
$(x_1,x_2)\mapsto \big(t^{c_1}x_1, t^{c_2}x_2\big)$. We get
\begin{gather}\label{susp-period}
\frac{1}{2\pi}\int_{\alpha_\lambda} \phi_i \frac{\omega}{df}=
\frac{1}{\pi} \partial_\lambda
\int_0^\lambda (\lambda-\mu)^{1/2} \int_{A_\mu} \frac{\phi_i(x_1,x_2)
 dx_1 dx_2}{dg} d\mu.
\end{gather}
The image of the Milnor lattice $H_2(V_1;\mathbb{Z})$ will be computed with
formula \eqref{susp-period}.

\subsection{Simple singularities and root systems}
Let us first recall several well known
facts about simple singularities, which will be needed in our
computation (see \cite[Theorem~3.14]{MR966191}). The analytic continuation of
$I^{(-1)}_\alpha(\lambda)$ along a loop around $\lambda=0$ yields
$I^{(-1)}_{\sigma(\alpha)}(\lambda)$, where $\sigma\colon H_2(V_1;\mathbb{Z})\to
H_2(V_1;\mathbb{Z})$ is the so-called {\em classical monodromy}
operator. Recalling the definition of $\Psi$ (see formula~\eqref{psi-def}), we get the following relation:
\begin{gather}\label{Psi of classical monodromy operator}
\Psi(\sigma(\alpha)) = - e^{2\pi \mathbf{i} \theta} \Psi(\alpha),
\end{gather}
where $\mathbf{i}:=\sqrt{-1}$. In particular, knowing the image of one cycle $\alpha$ allows us to
find the image of the entire $\sigma$-orbit of $\alpha$.

Let us define
\begin{gather*}
(\alpha|\beta):= \lambda \big(I^{(0)}_\alpha(\lambda),I^{(0)}_\beta(\lambda)\big),
\end{gather*}
where $I^{(0)}_\alpha(\lambda):=\partial_\lambda
I^{(-1)}_\alpha(\lambda)$. It is straightforward to check that
\begin{gather}\label{ip-formula}
(\alpha|\beta)= \frac{1}{\pi} (\Psi(\alpha),\cos (\pi \theta) \Psi(\beta)).
\end{gather}
It is known that $(\alpha|\beta)=-\alpha\circ \beta$, where $\circ $
is the intersection pairing (see \cite{MR2103007,Sa}). In particular, the
form $(\ |\ )$ takes integer values on the Milnor lattice.

Finally, let us also recall that we have the following remarkable
facts (see \cite[Theorem 3.14]{MR966191}):
\begin{enumerate}\itemsep=0pt
\item The set of vanishing cycles of the singularity $f$ coincides
with the set of all $\alpha\in H_2(V_1;\mathbb{Z})$ such that
$(\alpha|\alpha)=2$.
\item The triple (Milnor lattice, set of vanishing cycles, pairing
$(\ |\ )$) form a root system of the
same type as the type of the singularity $f$, that is, the set of vanishing
cycles corresponds to the roots, the Milnor lattice corresponds to the
root lattice, and $(\ |\ )$ corresponds to the invariant bilinear
form.
\item The classical monodromy corresponds to a Coxeter transformation.
\end{enumerate}

\subsection[$A_N$-singularity]{$\boldsymbol{A_N}$-singularity}\label{sec:a}

Let us fix the following basis of $H_f$:
\[ \phi_i=x_1^{i-1}, \qquad 1\leqslant i\leqslant N.\]
The residue pairing takes the form
\[(\phi_i,\phi_j)=\frac1{4h}\delta_{i+j,h}, \qquad 1\leqslant i,j\leqslant N, \]
where $h=N+1$ is the Coxeter number. The Riemann surface $M_\mu$
for $\mu\neq 0$ is a non-singular curve in $\mathbb{C}^2$ defined by the equation
$x_1^{N+1}+x_2^2=\mu$. The projection $(x_1,x_2)\mapsto x_1$
defines a~degree~2 branched covering $M_\mu \to \mathbb{C}$, with
branching points $x_{1,k}=\mu^{\tfrac{1}{N+1}} \eta_{N+1}^k$
($k\in\mathbb Z_{N+1}$), where $\eta_{N+1}:=e^{2\pi\mathbf{i} /(N+1)}$ and
$\mathbb{Z}_{N+1}:=\mathbb{Z}/(N+1)\mathbb{Z}$.

Let us construct a basis of $H_1(M_\mu;\mathbb{Z})\cong \mathbb{Z}^N$. Cycles on
$M_\mu$ can be visualized easily via their projections on the
$x_1$-plane $\mathbb{C}$. Let $L_k$ ($k\in\mathbb Z_{N+1}$) be the
line segment $[0,x_{1,k}] $ (in the
$x_1$-plane).
Let $A^\prime_k$ be a loop in the $x_1$-plane that starts at $x_1=0$,
it goes along the line segment $L_k$, just before hitting the branched
point $x_{1,k}$ it makes a small loop $C_k$ counterclockwise around
$x_{1,k}$, it returns back to the starting point along the line
segment $L_k$, and then the loop continues to travel in a similar
fashion along $L_{k+1}$ except that this time we make a small loop
$C_{k+1}^{-1}$ clockwise around $x_{1,k+1}$. In other words
$A_{k}^\prime= L_{k+1}^{-1}\circ C_{k+1}^{-1}\circ L_{k+1}\circ
L_k^{-1}\circ C_k\circ L_k$. Note that $A_{k}^\prime$ lifts to two
loops $A_{k,a}$, $a\in\mathbb Z_2$ on $M_{\mu}$, where the starting
point of $A^\prime_k$ lifts to
$x_{2,k,a}:=\mu^\frac12(-1)^{a}$. The cycles $A_{k,a}$ satisfy the
following relations $A_{k,0}=-A_{k,1}$ and
$\sum\limits_{k=0}^NA_{k,a}=0$. Let us asume $a\in\mathbb Z_2\setminus\{0\}$
and $k\in\mathbb Z_{N+1}\setminus\{0\}$, then we get $N$ loops whose
homology classes, as we will see later on, represent a basis of $H_1(M_\mu;\mathbb{Z})$.

Let us compute the periods of the holomorphic forms
\[ \phi_i(x_1,x_2)\frac{dx_1dx_2}{dg}=-\frac{x_1^{i-1}dx_1}{2x_2} \]
along the cycles $A_{k,a}$.
The paths $L_k$ and $C_k$ can be parametrized as follows:
\begin{gather*}
L_k\colon \ x_1=\eta_{N+1}^k\mu^{\frac1{N+1}}t^\frac 1{N+1}, \qquad
0\leqslant t\leqslant\left(1-\frac{\epsilon}{\mu^\frac1{N+1}}\right)^{N+1},\\
C_k\colon \ x_1=\eta_{N+1}^k\mu^{\frac1{N+1}}+\epsilon e^{\mathbf{i}\theta},
 \qquad
\frac{2k-N-1}{N+1}\pi\leqslant \theta\leqslant \frac{2k+N+1}{N+1}\pi.
\end{gather*}
The integrals along the lifts of $C_k$ contribute to the period
integral terms of order $O\big(\epsilon^{\frac12}\big)$. These terms vanish in
the limit $\epsilon\to 0$. The periods that we want to compute are
independent of $\epsilon$ for homotopy reasons. Therefore, by passing
to the limit $\epsilon\to 0$ we get
\begin{gather*}
\int_{A_{k,a}}\phi_i(x_1,x_2)\frac{dx_1dx_2}{dg} =(1-(-1))\left(\int_{L_{k,a}}-\int_{L_{k+1,a}}\right)\frac{-x_1^{i-1}dx_1}{2x_2}\\
\hphantom{\int_{A_{k,a}}\phi_i(x_1,x_2)\frac{dx_1dx_2}{dg}}{}
 =\left(\int_{L_{k+1,a}}-\int_{L_{k,a}}\right)\frac{x_1^{i-1}dx_1}{x_2} .
\end{gather*}

The integrals along $L_{k,a}$ can be expressed in terms of Euler's Beta function
$B(a,b):=\tfrac{\Gamma(a)\Gamma(b)}{\Gamma(a+b)}$,
\begin{gather*}
\int_{L_{k,a}}\frac{x_1^{i-1}dx_1}{x_2}=
(-1)^a\int_0^1\frac{
 \eta_{N+1}^{ki}\mu^{\frac i{N+1}}t^{\frac i{N+1}-1}dt }{
 (N+1)\mu^\frac12(1-t)^\frac12}=
(-1)^a\frac{
 \eta_{N+1}^{ki}\mu^{\frac i{N+1}-\frac12} }{ N+1}
B\left(\frac i{N+1},\frac12\right).
\end{gather*}
Let $\alpha_{k,a}=\Sigma A_{k,a}$ be the suspension. Recalling formula
\eqref{susp-period} and using that
\begin{gather}\label{integral of mu to get beta function}
\int_0^\lambda (\lambda-\mu)^{1/2} \mu^a d\mu = \lambda^{a+3/2} B(a+1,3/2),
\end{gather}
we get
\begin{gather*} \big(I^{(-1)}_{\alpha_{k,a}}(\lambda),\phi_{i}\big):= \frac1{2\pi}\int_{\alpha_{k,a}}\phi_i\frac{\omega}{df}=\frac1\pi\partial_\lambda\int_0^\lambda(\lambda-\mu)^\frac12\int_{A_{k,a}}\phi_i(x_1,x_2)\frac{dx_1dx_2}{dg}d\mu\\
\hphantom{\big(I^{(-1)}_{\alpha_{k,a}}(\lambda),\phi_{i}\big)}{} = (-1)^a\frac{\eta_{N+1}^{ki}\big(\eta_{N+1}^{i}-1\big)}{2i}\lambda^{\frac i{N+1}}.
\end{gather*}
Recalling the formulas for the residue pairing we get
\begin{gather*} I^{(-1)}_{\alpha_{k,a}}(\lambda)=4h\sum_{i=1}^N(I^{(-1)}_{\alpha_{k,a}}(\lambda),\phi_{h-i})\phi_i. \end{gather*}
By definition $\theta(\phi_i)=\big(\frac12-\frac i{N+1}\big)\phi_i$.
Therefore, using \eqref{psi-def}, we get
\begin{gather}\label{A-psi}
\Psi(\alpha_{k,a})=(-1)^a2\sum_{i=1}^N\eta_{N+1}^{-ki}\big(\eta_{N+1}^{-i}-1\big)
\Gamma\left(1-\frac i{N+1}\right)\phi_i.
\end{gather}
Let us point out that formula \eqref{Psi of classical monodromy
 operator} yields the following formulas for the classical monodromy operator
\begin{gather*}
\sigma(\alpha_{k,a})=-\alpha_{k+1,a+1}=\alpha_{k+1,a},\qquad
k\in\mathbb Z_{N+1}, \qquad a\in\mathbb Z_2. \end{gather*}
The intersection pairing takes the form
\begin{gather*}(\alpha_{k,1}|\alpha_{l,1})= \frac1\pi(\Psi(\alpha_{k,1}),\cos(\pi\theta)\Psi(\alpha_{l,1}))\\
\hphantom{(\alpha_{k,1}|\alpha_{l,1})}{}
= \frac2h\sum_{i=1}^N\eta_{N+1}^{(l-k)i}\left(1-\cos\left(\frac{2i\pi}h\right)\right)=2\delta_{k,l}-\delta_{l-k,1}-\delta_{l-k,N},
\end{gather*}
where $k,l\in\mathbb Z_{N+1}$ and the Kronecker delta is also on
$\mathbb Z_{N+1}$. Note that $(\alpha_{k,1}|\alpha_{k,1})=2$, so
$\alpha_{k,1}$ is a vanishing cycle. The determinant of the
intersection pairing in the basis $\{\alpha_{k,1}\}$, that is, the
determinant of the matrix $(\alpha_{k,1}|\alpha_{l,1})_{k,l=1}^N$ is
$N+1$, which coincides with the determinant of the Cartan matrix of the
simple Lie algebra of type $A_N$. Therefore, $\{\alpha_{k,1}\}$ is a
$\mathbb{Z}$-basis of the root lattice, that is, $H_2(V_\lambda;\mathbb{Z})$ and hence
their images $\Psi(\alpha_{k,1})$ (see formula \eqref{A-psi}) give a
basis for the image of the Milnor lattice in $H_f$.

\subsection[$D_N$-singularity]{$\boldsymbol{D_N}$-singularity}

Let us fix the following basis of $H_f$:
\begin{gather*}
\phi_i(x_1,x_2) =
\begin{cases}
x_2^{i-1} & \mbox{if } 1\leq i\leq N-1, \\
2x_1 & \mbox{if } i=N.
\end{cases}
\end{gather*}
The residue pairing takes the form
\begin{gather*}
(\phi_i,\phi_j)=\frac{1}{2h} \delta_{i+j,N},\quad
1\leq i,j\leq N-1,\qquad
(\phi_i,\phi_N)=-\delta_{i,N}, \quad
1\leq i\leq N,
\end{gather*}
where $h=2N-2$ is the Coxeter number. The Riemann surface $M_\mu$
for $\mu\neq 0$ is a non-singular curve in $\mathbb{C}^2$ defined by the equation
$x_1^2x_2+x_2^{N-1}=\mu$. The projection $(x_1,x_2)\mapsto x_2$
defines a~degree~2 branched covering $M_\mu \to \mathbb{C}^*$, with
branching points $x_{2,k}=\mu^{\tfrac{1}{N-1}} \eta^{2k}$ ($1\leq k\leq
N-1$), where $\eta=e^{2\pi\mathbf{i} /h}$. Let $A'_k$ be a simple loop in
$\mathbb{C}^*$ around the line segment $L_k:=[0, x_{2,k}]$, that is, $A_k'$
is a loop starting at a point on the line segment $L_k$
sufficiently close to~$0$, it goes along the line segment $L_k$, just
before hitting the branch point $x_{2,k}$ it makes a small loop $C_k$ around it and it
returns back to the starting point along the line segment $L_k$, and
finally it makes a small loop $C_0$ around $0$. Clearly, the loop
$A_k'=C_0\circ L_k^{-1} \circ C_k\circ L_k$ lifts to a loop $A_k$ in
$M_\mu$. Let us compute the periods of the holomorphic forms
\begin{gather*}
\phi_i(x_1,x_2) \frac{dx_1 dx_2}{dg} =
\begin{cases}
\dfrac{x_2^{i-1} dx_2}{2x_1 x_2} & \mbox{if } 1\leq i\leq N-1 , \vspace{1mm}\\
\dfrac{dx_2}{x_2} & \mbox{if } i=N,
\end{cases}
\end{gather*}
along the cycle $A_k$. If $i=N$, then the period integral is just
$2\pi \mathbf{i}$. Suppose that $1\leq i\leq N-1$. Let us parametrize $A'_k$
as follows:
\begin{gather*}
\nonumber
C_0\colon \ x_2=\epsilon e^{\mathbf{i} \theta},\qquad
0\leq \theta \leq 2\pi, \\
\nonumber
L_k\colon \ x_2 = \mu^{\tfrac{1}{N-1}} \eta^{2k} t, \qquad
\epsilon \mu^{-\tfrac{1}{N-1}} \leq
t\leq 1-\epsilon \mu^{-\tfrac{1}{N-1}} , \\
\nonumber
C_k\colon \ x_2 = \mu^{\tfrac{1}{N-1}} \eta^{2k} + \epsilon e^{\mathbf{i}
 \theta},\qquad
0\leq \theta \leq 2\pi.
\end{gather*}
The integrals along the lifts of $C_0$ and $C_k$ contribute to the
period integral terms of orders respectively $O\big(\epsilon^{i-1/2}\big)$ and
$O\big(\epsilon^{1/2}\big)$. These terms vanish in the limit $\epsilon \to 0$.
The two lifts of $L_k$, before and after going around the branch point
$x_{2,k}$, have parametrizations, such that,
\begin{gather*}
x_2= \mu^{\tfrac{1}{N-1}} \eta^{2k} t,\qquad
x_1 x_2= \sqrt{\big(\mu-x_2^{N-1}\big)x_2 } =
\eta^{k} \mu^{1/2+1/h} \big(1-t^{N-1}\big)^{1/2} t^{1/2},
\end{gather*}
where $t$ varies from $0$ to $1$, and
\begin{gather*}
x_2= \mu^{\tfrac{1}{N-1}} \eta^{2k} t,\qquad
x_1 x_2= -\sqrt{\big(\mu-x_2^{N-1}\big)x_2 } =
-\eta^{k} \mu^{1/2+1/h} \big(1-t^{N-1}\big)^{1/2} t^{1/2},
\end{gather*}
where $t$ varies from $1$ to $0$. Now it is clear that the period
integral, after passing to the limit $\epsilon \to 0$, takes the form
\begin{gather*}
\int_{A_k} \phi_i \frac{dx_1 dx_2}{dg} =
\mu^{\tfrac{m_i}{h} -\tfrac{1}{2}} \eta^{m_i k}
\int_0^1 t^{i-\tfrac{3}{2} } \big(1-t^{N-1}\big)^{-1/2} dt,
\end{gather*}
where $m_i:=2i-1$ ($1\leq i\leq N-1$).
The above integral can be computed as follows,
\begin{gather*}
\int_0^1 t^{i-3/2} \big(1-t^{N-1}\big)^{-1/2} dt = \frac{1}{N-1} \int_0^1
s^{(2i-1)\tfrac{1}{h}-1} (1-s)^{-\tfrac{1}{2}} ds =
 \frac{1}{N-1} B\left(\frac{m_i}{h},\frac{1}{2}\right).
\end{gather*}
We get the following formulas:
\begin{gather*}
\int_{A_k} \phi_i \frac{dx_1 dx_2}{dg} =
\begin{cases}
\dfrac{1}{N-1}
\mu^{\tfrac{m_i}{h} -\tfrac{1}{2}}
\eta^{m_i k}
B\left(\dfrac{m_i}{h},\dfrac{1}{2}\right) ,& \mbox{ if } 1\leq i\leq N-1,\\
2\pi \mathbf{i}, & \mbox{ if } i=N.
\end{cases}
\end{gather*}
Let $\alpha_k=\Sigma A_k$ be the suspension. Recalling formula
\eqref{susp-period} and using \eqref{integral of mu to get beta function}, we get
\begin{gather*}
\frac{1}{2\pi} \int_{\alpha_k} \phi_i \frac{\omega}{df} =
\begin{cases}
\dfrac{1}{h} \eta^{m_i k}\dfrac{\lambda^{m_i/h}}{m_i/h} , &
\mbox{if } 1\leq i\leq N-1, \\
2\mathbf{i} \lambda^{1/2}, &
\mbox{if } i=N.
\end{cases}
\end{gather*}
Therefore,
\begin{gather*}
I^{(-1)}_{\alpha_k}(\lambda) = 2\sum_{i=1}^{N-1}
\eta^{m_i k}\frac{\lambda^{m_i/h}}{m_i/h}\, \phi_{N-i}
-2\mathbf{i} \lambda^{1/2} \phi_N.
\end{gather*}
Note that $\theta(\phi_i) =\big(\tfrac{m_{N-i}}{h} -\tfrac{1}{2}\big)
\phi_i$ for $1\leq i\leq N-1$ and $\theta(\phi_N) = 0$. Therefore
\begin{gather}\label{D-psi}
\Psi(\alpha_k) = 2\sum_{i=1}^{N-1}
\eta^{m_i k}\Gamma(m_i/h) \phi_{N-i} -
\mathbf{i} \Gamma(m_N/h) \phi_N,
\end{gather}
where $m_N=N-1$.
\begin{Remark}
The numbers $\tfrac{m_i}{h}=\tfrac{2i-1}{h}$ ($1\leq i\leq N-1$),
$\tfrac{m_N}{h}=\tfrac{1}{2}$ are the Coxeter exponents.
\end{Remark}
Put
\begin{gather*}
v_k=2\sum_{i=1}^{N-1}
\eta^{m_i k}\Gamma(m_i/h) \phi_{N-i},\qquad 1\leq k\leq N-1,
\end{gather*}
and $v_N=\mathbf{i} \Gamma(m_N/h) \phi_N$.
\begin{Proposition}
The image of the Milnor lattice under the map $\Psi$ is the lattice
in $H_f$ with $\mathbb{Z}$-basis
\begin{gather*}
\beta_1=v_1-v_2,\dots, \beta_{N-1} = v_{N-1} -v_N, \qquad \beta_N=v_{N-1}+v_N.
\end{gather*}
\end{Proposition}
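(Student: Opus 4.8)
The plan is to produce $N$ cycles in $H_2(V_1;\mathbb{Z})$ whose images under $\Psi$ are exactly $\beta_1,\dots,\beta_N$, and then to certify that these cycles form a $\mathbb{Z}$-basis of the Milnor lattice by computing their intersection matrix and recognizing it as the Cartan matrix of type $D_N$.

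First I would use \eqref{Psi of classical monodromy operator} to understand the classical monodromy on the cycles $\alpha_k$ constructed above. Since $\theta$ acts diagonally with $\theta(\phi_{N-i})=\bigl(\tfrac{m_i}{h}-\tfrac12\bigr)\phi_{N-i}$ and $\theta(\phi_N)=0$, a short computation shows that $-e^{2\pi\mathbf{i}\theta}$ multiplies the $\phi_{N-i}$-component by $\eta^{m_i}$ and the $\phi_N$-component by $-1$; comparing with \eqref{D-psi} this gives $\Psi(\sigma(\alpha_{N-2}))=v_{N-1}+v_N=\beta_N$. I then set $\gamma_k:=\alpha_k-\alpha_{k+1}$ for $1\le k\le N-2$, $\gamma_{N-1}:=\alpha_{N-1}$, and $\gamma_N:=\sigma(\alpha_{N-2})$; these all lie in $H_2(V_1;\mathbb{Z})$, and \eqref{D-psi} together with the previous identity gives $\Psi(\gamma_k)=\beta_k$ for all $k$. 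In particular $\mathbb{Z}\beta_1+\dots+\mathbb{Z}\beta_N\subseteq\Psi(H_2(V_1;\mathbb{Z}))$.

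The heart of the argument is the computation of the intersection numbers $(\gamma_a|\gamma_b)$ via \eqref{ip-formula}. Using the residue pairing formulas for $D_N$, the relations $m_{N-i}=h-m_i$ and $\Gamma(m_i/h)\Gamma(1-m_i/h)\sin(\pi m_i/h)=\pi$, and $\Gamma(1/2)^2=\pi$, one finds $(\alpha_k|\alpha_l)=\tfrac2h\sum_{i=1}^{N-1}\eta^{m_i(l-k)}+1$; since $\sum_{i=1}^{N-1}\eta^{m_i r}$ vanishes unless $(N-1)\mid r$, this is $2$ for $k=l$ and $1$ for $k\ne l$ (with $1\le k,l\le N-1$). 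The same computation, now tracking the opposite sign of the $\phi_N$-component of $\Psi(\sigma(\alpha_{N-2}))$ relative to that of $\Psi(\alpha_j)$, yields $(\sigma(\alpha_{N-2})|\alpha_j)=-1$ for $1\le j\le N-2$, $(\sigma(\alpha_{N-2})|\alpha_{N-1})=0$, and $(\sigma(\alpha_{N-2})|\sigma(\alpha_{N-2}))=2$. Assembling these, $\bigl((\gamma_a|\gamma_b)\bigr)_{a,b=1}^N$ is the Cartan matrix of type $D_N$: the $\gamma_1,\dots,\gamma_{N-2}$ form an $A_{N-2}$ chain and $\gamma_{N-1}$, $\gamma_N$ both attach to the node $\gamma_{N-2}$.

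To conclude: by the facts recalled above, $(H_2(V_1;\mathbb{Z}),(\ |\ ))$ is the root lattice of type $D_N$, so its discriminant equals $\det$ of the $D_N$ Cartan matrix, namely $4$. The $\gamma_k$ are $\mathbb{Q}$-linearly independent (their Gram determinant equals $4\ne0$) and span a full-rank sublattice of $H_2(V_1;\mathbb{Z})$ of index $\sqrt{4/4}=1$, hence a $\mathbb{Z}$-basis of the Milnor lattice. Applying the linear isomorphism $\Psi$ then shows that $\beta_1,\dots,\beta_N$ is a $\mathbb{Z}$-basis of $\Psi(H_2(V_1;\mathbb{Z}))$, which is the claim. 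I expect the main obstacle to be the intersection-number step: collapsing the products of $\Gamma$-values with the reflection formula, evaluating the root-of-unity sums $\sum_i\eta^{m_i r}$, and correctly bookkeeping the two sources of signs — the $\phi_N$-pairing being $-1$, and the coefficients $\mp\mathbf{i}\,\Gamma(m_N/h)$ that distinguish $\alpha_k$ from $\sigma(\alpha_{N-2})$.
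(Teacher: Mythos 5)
Your proof is correct and follows essentially the same route as the paper's: exhibit integral cycles mapping to the $\beta_i$ (using $\Psi(\alpha_k)=v_k-v_N$ and the classical monodromy to reach $v_{N-1}+v_N$), and identify their Gram matrix with the $D_N$ Cartan matrix. The paper merely shortens the pairing computations by first observing that $(v_i|v_j)=\delta_{ij}$, and closes by noting that the $\beta_i$ are simple roots rather than by your (equally valid) discriminant/index comparison, which is in fact the argument the paper itself uses in the $A_N$ and $E$ cases.
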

\begin{proof}
Using formula \eqref{ip-formula}, it is straightforward to check
that $\{v_i\}_{1\leq i\leq N}$ is an orthonormal basis of $H_f$
with respect to the intersection
pairing, that is, $(v_i|v_j)=\delta_{i,j}$. We have
$\Psi(\alpha_k)=v_k-v_N$ and $\Psi(\sigma
\alpha_k)-\Psi(\alpha_{k+1}) =2v_N$. Therefore, $\beta_i$ belongs to
the image of the Milnor lattice. On the other hand, since
$(\beta_i|\beta_i)=2$, we get that $\beta_i$ is the image of a vanishing
cycle. Recalling the root system interpretation of the set of
vanishing cycles, we get that $\beta_i$ ($1\leq i\leq N$) are simple
roots and that the
corresponding Dynkin diagram is the Dynkin diagram of type
$D_N$. Since the Milnor lattice is spanned by the set of vanishing
cycles, the claim of the proposition follows.
\end{proof}

\subsection[$E_6$-singularity]{$\boldsymbol{E_6}$-singularity}

Let us fix the following basis of $H_f$:
\begin{gather*} \phi_i=\begin{cases}x_2^{i-1}&\text{if }1\leqslant i\leqslant
 3,\\x_1x_2^{i-4}&\text{if }4\leqslant i\leqslant 6.\end{cases}\end{gather*}
The residue pairing takes the form
\begin{gather*} (\phi_i,\phi_j)=\frac1{2h}\delta_{i+j,7},\qquad 1\leqslant i,j\leqslant 6,\end{gather*}
where $h=12$ is the Coxeter number.
The Riemann surface $M_\mu$ for $\mu\ne 0$ is a non-singular curve in
$\mathbb C^2$ defined by the equation $x_1^3+x_2^4=\mu$.
The projection $(x_1,x_2)\mapsto x_2$ defines a~degree~$3$ branched
covering $M_\mu\to \mathbb C$, with branching points
$x_{2,k}=\mu^{\frac14}\mathbf{i}^k$, $k\in\mathbb Z_4$.

Let $L_k$ ($k\in\mathbb Z_4$) be the line segment
$\big[0,\mu^{\frac14}\mathbf{i}^k\big]$.
Let $A^\prime_k$ be a loop in the $x_2$-plane $\mathbb C$ going around
the branch points $x_{2,k}$ and $x_{2,k+1}$ in the following way: the
loop starts at $0$, it goes along the line segment $L_k$, just before
hitting the branch point $x_{2,k}$ it makes a small loop $C_k$
counterclockwise around $x_{2,k}$, it returns back to the starting
point along $L_k$; then the loop travels in a similar fashion along
$L_{k+1}$ except that this time we make a small loop $C_{k+1}^{-1}$ in
{\em clockwise} direction around $x_{2,k+1}$.
Clearly, the loop $A_{k}^\prime= L_{k+1}^{-1}\circ C_{k+1}^{-1}\circ
L_{k+1}\circ L_k^{-1}\circ C_k\circ L_k$ lifts to three loops
$A_{k,a}$, $a\in\mathbb Z_3$ in $M_{\mu}$, depending on how we choose
the lift of the base point, i.e., the $x_1$-coordinate of the lift of
the base point of $A^\prime_k$ could take the following values:
$x_{1,a}=\mu^\frac13\eta_3^{a}$, where $\eta_3:=e^{\frac{2\pi\mathbf{i}}{3}}$.

Let us consider the loops $A_{k,a}$ with $a\in\mathbb
Z_3\setminus\{0\}$ and $k\in\mathbb Z_4\setminus\{0\}.$
Let us compute the periods of the holomorphic forms
\begin{gather*}
 \phi_i(x_1,x_2)\frac{dx_1dx_2}{dg}=\begin{cases}\dfrac{x_2^{i-1}dx_2}{3x_1^2}
 & \text{if }1\leqslant i\leqslant 3, \vspace{1mm}\\ \dfrac{x_2^{i-4}dx_2}{3x_1}
 & \text{if }4\leqslant i\leqslant 6, \end{cases}
\end{gather*}
along the cycles $A_{k,a}$. As a byproduct of our
computation we will get that the homology classes of these 6 loops
form a basis of $H_1(M_\mu;\mathbb{Z})$. Let us parametrize $A_k^\prime$ as follows:
\begin{gather*}
L_k\colon \ x_2=\mathbf{i}^k\mu^{\frac14}t^\frac 14, \qquad
0\leqslant t\leqslant\left(1-\frac{\epsilon}{\mu^\frac14}\right)^4,\\
C_k\colon \ x_2=\mathbf{i}^k\mu^{\frac14}+\epsilon e^{\mathbf{i}\theta}, \qquad
\frac{k-2}2\pi\leqslant \theta\leqslant \frac{k+2}2\pi.
\end{gather*}
The integrals along the lifts of $C_k$ contribute to the period
integral terms of orders
\begin{gather*}
\begin{cases}
 O\big(\epsilon^{\frac13}\big) &\text{if }1\leqslant i\leqslant 3, \\
 O\big(\epsilon^{\frac23}\big) &\text{if }4\leqslant i\leqslant 6.
\end{cases}
\end{gather*}
These terms vanish in the limit $\epsilon\to 0$. Therefore, under this limit, the periods of the holomorphic forms
\begin{gather*}
\int_{A_{k,a}}\phi_i(x_1,x_2)\frac{dx_1dx_2}{dg}=\begin{cases}\displaystyle \big(1-\eta_3^{-2}\big)\left(\int_{L_{k,a}}-\int_{L_{k+1,a}}\right) \frac{\phi_idx_2}{3x_1^2}&\text{if }1\leqslant i\leqslant3,\vspace{1mm}\\ \displaystyle \big(1-\eta_3^{-1}\big)\left(\int_{L_{k,a}}-\int_{L_{k+1,a}}\right)\frac{\phi_idx_2}{3x_1^2}&\text{if }4\leqslant i\leqslant6,\end{cases}
\end{gather*}
where $\eta_3:=e^{\frac{2\pi\mathbf{i}}{3}}$ and the integral
\begin{gather*}
\int_{L_{k,a}}\frac{\phi_idx_2}{3x_1^2}=
\begin{cases}\displaystyle \int_0^1\frac{\mathbf{i}^{ki}\mu^\frac
 i4t^{\frac{i}4-1}dt}{12\mu^{\frac23}(1-t)^\frac23\eta_3^{2a}}=\frac{\mathbf{i}^{ki}}{12}\eta_3^a\mu^{\frac
 i4-\frac23}B\left(\frac i4,\frac13\right)&\text{if }1\leqslant i\leqslant3,\vspace{1mm}\\
\displaystyle \int_0^1\frac{\mathbf{i}^{k(i-3)}\mu^\frac
 {i-3}4t^{\frac{i-3}4-1}dt}{12\mu^{\frac13}(1-t)^\frac13\eta_3^{a}}=\frac{\mathbf{i}^{k(i-3)}}{12}\eta_3^{2a}\mu^{\frac
 {i-3}4-\frac13}B\left(\frac {i-3}4,\frac23\right)&\text{if }4\leqslant
i\leqslant6.
\end{cases}
\end{gather*}
Then,
\begin{gather*}
\int_{A_{k,a}}\phi_i(x_1,x_2)\frac{dx_1dx_2}{dg}\\
\qquad{} =
\begin{cases}\displaystyle \big(1-\eta_3^{-2}\big)\big(1-\mathbf{i}^i\big)\frac{\mathbf{i}^{ki}}{12}\eta_3^a\mu^{\frac
 i4-\frac23}B\left(\frac i4,\frac13\right)&\text{if }1\leqslant
 i\leqslant3,\\
\displaystyle \big(1-\eta_3^{-1}\big)\big(1-\mathbf{i}^{i-3}\big)\frac{\mathbf{i}^{k(i-3)}}{12}\eta_3^{2a}\mu^{\frac
 {i-3}4-\frac13}B\left(\frac {i-3}4,\frac23\right)&\text{if }4\leqslant
 i\leqslant6.
\end{cases}
\end{gather*}
Let $\alpha_{k,a}=\Sigma A_{k,a}$ be the suspension. Recalling formula
\eqref{susp-period} and using \eqref{integral of mu to get beta function}
\begin{gather*}\big(I^{(-1)}_{\alpha_{k,a}}(\lambda),\phi_{i}\big):= \frac1{2\pi}\int_{\alpha_{k,a}}\phi_i\frac{\omega}{df}=\frac1\pi\partial_\lambda \int_0^\lambda(\lambda-\mu)^\frac12\int_{A_{k,a}}\phi_i(x_1,x_2)\frac{dx_1dx_2}{dg}d\mu\\
\hphantom{\big(I^{(-1)}_{\alpha_{k,a}}(\lambda),\phi_{i}\big)}{}
= \begin{cases}
\displaystyle\frac{\sqrt 3}{12\pi}\eta_3^ae^{-\frac\pi 6\mathbf{i}}\lambda^{\frac i4-\frac16}\frac{\Gamma\big(\frac32\big)\Gamma\big(\frac i4\big)\Gamma\big(\frac13\big)}{\Gamma\big(\frac i4+\frac56\big)}\mathbf{i}^{ki}\big(1-\mathbf{i}^i\big)&\text{if }1\leqslant i\leqslant3,\vspace{1mm}\\
\displaystyle \frac{\sqrt 3}{12\pi}\eta_3^{2a}e^{\frac\pi 6\mathbf{i}}\lambda^{\frac {i-3}4+\frac16}\frac{\Gamma(\frac32)\Gamma\big(\frac {i-3}4\big)\Gamma\big(\frac23\big)}{\Gamma\big(\frac {i-3}4+\frac76\big)}\mathbf{i}^{k(i-3)}\big(1-\mathbf{i}^{i-3}\big)&\text{if }4\leqslant i\leqslant6.
\end{cases}
\end{gather*}
Recalling the formulas for the residue pairing in the basis
$\{\phi_i\}$ we get
\begin{gather*}
I^{(-1)}_{\alpha_{k,a}}(\lambda)=2h\sum_{i=1}^6\big(I^{(-1)}_{\alpha_{k,a}}(\lambda),\phi_{7-i}\big)\phi_i. \end{gather*}
Recalling formula~\eqref{psi-def} and using that by definition
\[
\theta(\phi_i)=\begin{cases}\displaystyle \left(\frac 23-\frac i4\right)\phi_i&\text{if
 }1\leqslant i\leqslant3,\\ \displaystyle \left(\frac13-\frac{i-3}4\right)\phi_i&\text{if
 }4\leqslant i\leqslant6,\end{cases}
 \] we get
\begin{gather}
\Psi(\alpha_{k,a})= \sqrt{\frac
 3\pi}\sum_{i=1}^3e^{\frac\pi
 6\mathbf{i}}{\eta_3^{2a}}\Gamma\left(1-\frac
 {i}4\right)\Gamma\left(\frac23\right)\mathbf{i}^{-ki}\big(1-\mathbf{i}^{-i}\big)\phi_i\notag\\
\hphantom{\Psi(\alpha_{k,a})=}{} + \sqrt{\frac 3\pi}\sum_{i=4}^6e^{-\frac\pi
 6\mathbf{i}}{\eta_3^{a}}\Gamma\left(1-\frac
 {i-3}4\right)\Gamma\left(\frac13\right)\mathbf{i}^{k(3-i)}\big(1-\mathbf{i}^{3-i}\big)\phi_i .\label{E6-psi}
\end{gather}
Let us also point out that by using formula \eqref{Psi of classical
 monodromy operator}, we get the following formulas for the classical
monodromy operator:
\[ \sigma(\alpha_{k,a})=-\alpha_{k+1,a+1},\qquad k\in\mathbb Z_4, \qquad a\in\mathbb Z_3. \]
Recalling formula \eqref{ip-formula},
we get that the intersection pairing
\begin{align*}(\alpha_{k,a}|\alpha_{l,b})& =\frac1\pi(\Psi(\alpha_{k,a}),\cos(\pi\theta)\Psi(\alpha_{l,b}))\\
& =\frac18\sum_{i=1}^3\big(\eta_3^{b-a}\mathbf{i}^{(l-k)i}+\eta_3^{a-b}\mathbf{i}^{(k-l)i}\big)\frac{\cos\big(\big(\frac
 i4-\frac23\big)\pi\big)}{\sin\big(\frac i 4\pi\big)\sin\big(\frac\pi
 3\big)}\big(2-\mathbf{i}^{-i}-\mathbf{i}^{i}\big)\\
& =\sum_{i=1}^3\cos\left(\frac23(b-a)\pi+\frac i2(l-k)\pi\right)\frac{\cos\big(\big(\frac
 i4-\frac23\big)\pi\big)}{\sin\big(\frac\pi 3)}\sin\left(\frac i 4\pi\right).
\end{align*}
Let us identify $\mathbb{Z}_3\setminus\{0\}=\{1,2\}$ and
$\mathbb{Z}_4\setminus\{0\}=\{1,2,3\}$. Every $1\leq a'\leq 6$ can be written
uniquely in the form $a'=3(a-1)+k$, where $1\leq a\leq 2$ and $1\leq
k\leq 3$. Let us define $\alpha_{a'}:=\alpha_{k,a}$. The
intersection pairings $(\alpha_{a^\prime}|\alpha_{b^\prime})$ are
straightforward to compute using the formula from above. We get that
$(\alpha_{a^\prime}|\alpha_{b^\prime})$ coincides with the
$(a',b')$-entry of the following matrix:
\[\begin{pmatrix}
2 & -1 & 0 & -1 & 0 & 0\\
-1 & 2 & -1 & 1 & -1 & 0\\
0 & -1 & 2 & 0 & 1 & -1\\
-1 & 1 & 0 & 2 & -1 & 0\\
0 & -1 & 1 & -1 & 2 & -1\\
0 & 0 & -1 & 0 & -1 & 2
\end{pmatrix}.
\]
The above matrix has determinant $3$. Therefore, the set $\{\alpha_{k,a}\, |\,
1\leq a\leq 2,\, 1\leq k\leq 3\}$ is a set of linearly independent
vanishing cycles. Since the set of all vanishing cycles is a root
system of type $E_6$ and the determinant of the Cartan matrix of a
root system of $E_6$ is also~3, we get that the $\{\alpha_{k,a}\}$ is a set
of simple roots. In particular, it is a $\mathbb{Z}$-basis of the Milnor lattice.

\subsection[$E_7$-singularity]{$\boldsymbol{E_7}$-singularity}

Let us fix the following basis of $H_f$:
\[ \phi_i=\begin{cases}x_1^{i-1}&\text{if }1\leqslant i\leqslant 3,\\x_2x_1^{i-4}&\text{if }4\leqslant i\leqslant 6,\\x_2^2&\text{if }i=7. \end{cases}\]
The residue pairing takes the form
\[ (\phi_i,\phi_j)=\begin{cases}\frac1h\delta_{i+j,7},& 1\leqslant i,j\leqslant 6,\\
-\frac1{6},& i=j=7,\\
0, & \text{otherwise},\end{cases}\]
where $h=18$ is the Coxeter number.
The Riemann surface $M_\mu$ for $\mu\ne 0$ is a non-singular curve in
$\mathbb C^2$ defined by the equation $x_1^3+x_1x_2^3=\mu$. The projection
$(x_1,x_2)\mapsto x_1$ defines a degree $3$ branched covering
$M_\mu\to \mathbb C^*$, with branching points
$x_{1,k}=\mu^{\frac13}\eta_3^k$ $(0\leqslant k\leqslant 2)$, where
$\eta_3=e^{\frac23\pi\mathbf{i}}$.

The method of constructing loops in $M_\mu$ is similar to that of $D_N$-singularity.
Let $A'_k$ be a~simple loop in $\mathbb{C}^*$ around the line segment
$L_k:=[0, x_{1,k}]$, that is, $A_k'$ is a loop starting at a~point~$\tilde\epsilon\eta_3^k$
($0<\tilde\epsilon \ll 1$) on the line segment~$L_k$
sufficiently close to $0$, it goes along the line segment $L_k$, just
before hitting the branch point $x_{1,k}$ it makes a small loop $C_k$
counterclockwise around it, it returns back to the starting point
along the line segment~$L_k$, and finally it makes a small loop $C_0$
counterclockwise around~$0$. Clearly, the loop $A_k'=C_0\circ L_k^{-1}
\circ C_k\circ L_k$ lifts to a~loop~$A_{k,a}$, $a=0,1,2$ in
$M_\mu$, where $a$ indicates the lift of the base point, that is, the
base point is lifted to
$(x_{1,k,a}=\widetilde{\epsilon}\eta_3^k,
x_{2,k,a}=\big(\frac{\mu-\widetilde{\epsilon}^3}{\widetilde{\epsilon}}\big)^\frac13\eta_3^{a-\frac
 k3})$.
We will compute the period integrals along~$A_{k,a}$ for $0\leq k, a
\leq 2$. As a biproduct of our computation we will get that the
following set of 7 loops
$\{A_{0,1},A_{1,1},A_{2,1},A_{0,2},A_{1,2},A_{2,2},A_{0,0}\}$
represents a basis of $H_1(M_\mu;\mathbb{Z})$.

Let us compute the periods of the holomorphic forms
\begin{gather*}
\phi_i(x_1,x_2) \frac{dx_1 dx_2}{dg} =
\begin{cases}
-\dfrac{x_1^{i-2} dx_1}{3x_2^2} & \mbox{if } 1\leq i\leq 3 , \vspace{1mm}\\
-\dfrac{x_1^{i-5}}{3x_2}dx_1 & \mbox{if } 4\leq i\leq 6, \vspace{1mm}\\
-\dfrac{dx_1}{3x_1} & \mbox{if } i=7,
\end{cases}
\end{gather*}
along the cycle $A_{k,a}$. Let us parametrize $A_k^\prime$ as follows:
\begin{gather*}
C_0\colon \ x_1=\tilde\epsilon e^{\mathbf{i} \theta},\qquad
\frac{2k}3\pi\leq \theta \leq \frac{2k+6}3\pi, \\
L_k\colon \ x_1=\eta_3^k\mu^{\frac13}t^\frac 13, \qquad
\frac{\epsilon^3}\mu\leqslant t\leqslant\left(1-\frac{\epsilon}{\mu^\frac13}\right)^3,\\
C_k\colon \ x_1=\eta_3^k\mu^{\frac13}+\epsilon e^{\mathbf{i}\theta}, \qquad
\frac{2k-3}3\pi\leqslant \theta\leqslant \frac{2k+3}3\pi.
\end{gather*}
The integrals along the lifts of $C_0$ and $C_k$ contribute to the
period integral terms of orders
respectively
\[
\begin{cases}O\big(\tilde\epsilon^{i-1+\frac23}\big) \quad \text{and} \quad O\big(\epsilon^{1-\frac23}\big)& \text{if }1\leq i\leq3,
 \\O\big(\tilde\epsilon^{i-4+\frac13}\big) \quad \text{and} \quad O\big(\epsilon^{1-\frac13}\big)& \text{if }4\leq i\leq6, \\ O\big(\tilde
 \epsilon^{0}\big) \quad \text{and} \quad O\big(\epsilon^{1}\big)& \text{if } i=7.
\end{cases}
\]
In the limit $\tilde \epsilon, \epsilon \to 0$ all integrals along the
loops $C_0$ and $C_k$ vanish except for the integral along~$C_0$ when
$i=7$. The latter however is straightforward to compute. Therefore,
after passing to the limit $\epsilon,\tilde{\epsilon}\to 0$, we
get
\[ \int_{A_{k,a}}\phi_i(x_1,x_2)\frac{dx_1dx_2}{dg}=\begin{cases}
\displaystyle \big(1-\eta_3^{-2}\big)\int_{L_{k,a}}\frac{-\phi_idx_1}{3x_1x_2^2} & \mbox{if } 1\leq i\leq 3 , \\
\displaystyle \big(1-\eta_3^{-1}\big)\int_{L_{k,a}}\frac{-\phi_idx_1}{3x_1x_2^2} & \mbox{if } 4\leq i\leq 6, \\
\displaystyle -\frac{2\pi\mathbf{i}}{3} & \mbox{if } i=7, \end{cases}
\]
where $\eta_3:=e^{\frac{2\pi\mathbf{i}}{3}}$ and the integral
\begin{gather*}
\int_{L_{k,a}}\frac{-\phi_idx_1}{3x_1x_2^2}=
\begin{cases}
\displaystyle \int_0^1\frac{-\eta_3^{k(i-\frac13)-2a}\mu^{\frac
 13(i-\frac73)}t^{\frac13(i-\frac{10}3)}dt}{9(1-t)^\frac23}\vspace{1mm}\\
\displaystyle \qquad{} =-\frac{\eta_3^{k(i-\frac13)-2a}\mu^{\frac
 13(i-\frac73)}}{9}B\left(\frac i3-\frac19,\frac13\right)&\text{if
 }1\leqslant i\leqslant3,\vspace{1mm}\\
\displaystyle \int_0^1\frac{-\eta_3^{k(i-4+\frac13)-a}\mu^{\frac
 13(i-4-\frac23)}t^{\frac13(i-4-\frac{8}3)}dt}{9(1-t)^\frac13}\vspace{1mm}\\
\displaystyle \qquad{} =-\frac{\eta_3^{k(i-4+\frac13)-a}\mu^{\frac
 13(i-4-\frac23)}}{9}B\left(\frac {i-4}3+\frac19,\frac23\right)&\text{if
}4\leqslant i\leqslant6. \end{cases}
\end{gather*}
Let $\alpha_{k,a}=\Sigma A_{k,a}$ be the suspension. Recalling formula
\eqref{susp-period} and using \eqref{integral of mu to get beta function}
\begin{align*}
\big(I^{(-1)}_{\alpha_{k,a}}(\lambda),\phi_{i}\big):={} &\frac1{2\pi}\int_{\alpha_{k,a}}\phi_i\frac{\omega}{df}=\frac1\pi\partial_\lambda\int_0^\lambda(\lambda-\mu)^\frac12\int_{A_{k,a}}\phi_i(x_1,x_2)\frac{dx_1dx_2}{dg}d\mu\\
={} &\begin{cases}\displaystyle-\frac{\sqrt 3}{9\pi}\eta_3^{k(i-\frac13)-2a}e^{-\frac\pi 6\mathbf{i}}\lambda^{\frac i3-\frac79+\frac12}\frac{\Gamma(\frac32)\Gamma(\frac i3-\frac19)\Gamma(\frac13)}{\Gamma(\frac i3-\frac79+\frac32)}&\text{if }1\leqslant i\leqslant3,\vspace{1mm}\\
\displaystyle-\frac{\sqrt 3}{9\pi}\eta_3^{k(i-4+\frac13)-a}e^{\frac\pi 6\mathbf{i}}\lambda^{\frac {i-4}3-\frac29+\frac12}\frac{\Gamma(\frac32)\Gamma(\frac {i-4}3+\frac19)\Gamma(\frac23)}{\Gamma(\frac {i-4}3+\frac79+\frac12)}&\text{if }4\leqslant i\leqslant6, \vspace{1mm}\\
\displaystyle -\frac23\mathbf{i}\lambda^\frac12&\text{if }i=7.
\end{cases}
\end{align*}
Recalling the formulas for the residue pairing in the basis
$\{\phi_i\}$ we get
\[
I^{(-1)}_{\alpha_{k,a}}(\lambda)=h\sum_{i=1}^6\big(I^{(-1)}_{\alpha_{k,a}}(\lambda),\phi_{7-i}\big)\phi_i+4\mathbf{i}\lambda^\frac12\phi_7.
\]
Recalling formula \eqref{psi-def} and using that
\[
\theta(\phi_i)=
\begin{cases}
\displaystyle \left(\frac 49-\frac {i-1}3\right)\phi_i&\text{if }1\leqslant i\leqslant3,\vspace{1mm}\\
\displaystyle \left(\frac29-\frac{i-4}3\right)\phi_i&\text{if }4\leqslant i\leqslant6,\\
0&\text{if }i=7,\end{cases}
\]
we get
\begin{gather}
\Psi(\alpha_{k,a})= -\sqrt{\frac 3\pi}
\sum_{i=1}^3e^{\frac\pi 6\mathbf{i}}{\eta_3^{k(\frac13-i)-a}}
\Gamma\left(\frac {3-i}3+\frac19\right)\Gamma\left(\frac23\right)\phi_i\notag\\
\hphantom{\Psi(\alpha_{k,a})=}{} -\sqrt{\frac 3\pi}
\sum_{i=4}^6e^{-\frac\pi 6\mathbf{i}}{\eta_3^{k(4-i-\frac13)-2a}}
\Gamma\left(\frac{7-i}3-\frac19\right)\Gamma\left(\frac13\right)\phi_i
+2\sqrt{-\pi}\phi_7.\label{E7-psi}
\end{gather}
Recalling formula \eqref{Psi of classical monodromy operator}, we get
the following formulas for the classical monodromy operator:
\[
\sigma(\alpha_{k,a})=-\alpha_{k+1,a+1},\qquad k, a\in\mathbb Z_3.
\]
Using formula \eqref{ip-formula}, we get that the intersection pairing
\begin{align*}(\alpha_{k,a}|\alpha_{l,b})& =\frac1\pi(\Psi(\alpha_{k,a}),\cos(\pi\theta)\Psi(\alpha_{l,b}))\\
& =\frac23+\frac16\sum_{i=1}^3\Big(\eta_3^{b-a+(l-k)(i-\frac13)}+\eta_3^{a-b+(k-l)(i-\frac13)}\Big)\frac{\cos\big(\big(\frac i3-\frac79\big)\pi\big)}{\sin\big(\big(\frac i 3-\frac19\big)\pi\big)\sin\big(\frac\pi 3\big)}\\
& =\frac23+\frac13\sum_{i=1}^3\cos\left(\frac{2\pi}3\left({a-b+(k-l)\left(i-\frac13\right)}\right)\right)\frac{\cos\big(\big(\frac i3-\frac79\big)\pi\big)}{\sin\big(\big(\frac i 3-\frac19\big)\pi\big)\sin\big(\frac\pi 3\big)}.
\end{align*}
Let us identify $\mathbb{Z}_3=\{0,1,2\}$. Every $1\leq a'\leq 7$ can be
written uniquely in the form $a^\prime=3(a-1)+k+1$, $1\leq a\leq 3$,
$0\leq k\leq 2$. Put $\alpha_{a'}:=\alpha_{k,\overline{a}}$, where
$0\leq \overline{a}\leq 2$ is the remainder of $a$ modulo 3. Using
the above formula, we get that the intersection pairing in the basis
$\{\alpha_{a'}\}_{1\leq a'\leq 7}$ takes the form
\[\begin{pmatrix}
2&1&1&0&1&1&0\\
1&2&1&0&0&1&1\\
1&1&2&0&0&0&1\\
0&0&0&2&1&1&0\\
1&0&0&1&2&1&0\\
1&1&0&1&1&2&0\\
0&1&1&0&0&0&2
\end{pmatrix}.
\]
The above matrix has determinant $2$. Since the determinant of the
Cartan matrix of the root system of type $E_7$ is also 2, the conclusion is the
same as in the case of $E_6$-singularity, that is, the cycles
$(\alpha_1,\dots,\alpha_7)=(\alpha_{0,1},
\alpha_{1,1},\alpha_{2,1},\alpha_{0,2},\alpha_{1,2},
\alpha_{2,2},\alpha_{0,0})$ form a $\mathbb{Z}$-basis of the Milnor lattice
and hence their images under $\Psi$, computed by formula
\eqref{E7-psi}, give a $\mathbb{Z}$-basis for the image of the Milnor lattice
in $H_f$.

\subsection[$E_8$-singularity]{$\boldsymbol{E_8}$-singularity}\label{sec:e8}

Let us fix the following basis of $H_f$:
\[\phi_i=\begin{cases}x_2^{i-1}&\text{if }1\leqslant i\leqslant 4,\\x_1x_2^{i-5}&\text{if }5\leqslant i\leqslant 8.\end{cases}\]
The residue pairing takes the form
\[(\phi_i,\phi_j)=\frac1{h}\delta_{i+j,9},\qquad 1\leqslant i,j\leqslant 8,\]
where $h=30$ is the Coxeter number.
The Riemann surface $M_\mu$ for $\mu\ne 0$ is a non-singular curve in $\mathbb C^2$ defined by equation $x_1^3+x_2^5=\mu$. The projection $(x_1,x_2)\mapsto x_2$ defines a degree $3$ branched covering $M_\mu\to \mathbb C$, with branching points $x_{2,k}=\mu^{\frac15}\eta_5^k$, $k\in\mathbb Z_5$, where $\eta_5=e^{\frac25\pi\mathbf{i}}$.

The method for constructing loops in $M_\mu$ is almost the same as
that for $E_6$-singularity. Let us omit the similar narration, i.e.,
we define the loops $A_{k,a}$ in the same way, except that now
$a\in\mathbb Z_3\setminus\{0\}$ and $k\in\mathbb
Z_5\setminus\{0\}$. We will see that the homology classes of these 8
loops form a~$\mathbb{Z}$-basis of $H_1(M_\mu;\mathbb{Z})$.
Let us compute the periods of the holomorphic forms
\[ \phi_i(x_1,x_2)\frac{dx_1dx_2}{dg}=\begin{cases}\dfrac{x_2^{i-1}dx_2}{3x_1^2} & \text{if }1\leqslant i\leqslant 4, \vspace{1mm}\\ \dfrac{x_2^{i-5}dx_2}{3x_1} & \text{if }5\leqslant i\leqslant 8, \end{cases}\]
along the cycle $A_{k,a}$. Let us parametrize $A_k^\prime$ as follows:
\begin{gather*}
L_k\colon \ x_2=\eta_5^k\mu^{\frac15}t^\frac 15, \qquad 0\leqslant
 t\leqslant\bigg(1-\frac{\epsilon}{\mu^\frac15}\bigg)^5,\\
C_k\colon \ x_2=\eta_5^k\mu^{\frac15}+\epsilon e^{\mathbf{i}\theta},
 \qquad \frac{2k-5}5\pi\leqslant \theta\leqslant
 \frac{2k+5}5\pi.
\end{gather*}
The integrals along the lifts of $C_k$ contribute to the period
integral terms of orders
$O\big(\epsilon^{\frac13}\big) $ if $1\leqslant i\leqslant 3$ and
$O\big(\epsilon^{\frac23}\big)$ if $4\leqslant i\leqslant 6$.
These terms vanish in the limit $\epsilon\to 0$. Therefore, under this limit, the periods of the holomorphic forms
\[ \int_{A_{k,a}}\phi_i(x_1,x_2)\frac{dx_1dx_2}{dg}=\begin{cases}
\displaystyle \big(1-\eta_3^{-2}\big)\left(\int_{L_{k,a}}-\int_{L_{k+1,a}}\right)\frac{\phi_idx_2}{3x_1^2}&\text{if }1\leqslant i\leqslant4,\vspace{1mm}\\
\displaystyle \big(1-\eta_3^{-1}\big)\left(\int_{L_{k,a}}-\int_{L_{k+1,a}}\right)\frac{\phi_idx_2}{3x_1^2}&\text{if }5\leqslant i\leqslant8,\end{cases}
\]
where $\eta_3:=e^{\frac{2\pi\mathbf{i}}{3}}$ and the integral
\begin{gather*}
\int_{L_{k,a}}\frac{\phi_idx_2}{3x_1^2}=\begin{cases}
\displaystyle \int_0^1\frac{\eta_5^{ki}\mu^\frac i5t^{\frac{i}5-1}dt}{15\mu^{\frac23}(1-t)^\frac23\eta_3^{2a}}=\frac{\eta_5^{ki}}{15}\eta_3^a\mu^{\frac i5-\frac23}B\left(\frac i5,\frac13\right)&\text{if }1\leqslant i\leqslant4,\vspace{1mm}\\
\displaystyle \int_0^1\frac{\eta_5^{k(i-4)}\mu^\frac {i-4}5t^{\frac{i-4}5-1}dt}{15\mu^{\frac13}(1-t)^\frac13\eta_3^{a}}=\frac{\eta_5^{k(i-4)}}{15}\eta_3^{2a}\mu^{\frac {i-4}5-\frac13}B\left(\frac {i-4}5,\frac23\right)&\text{if }5\leqslant i\leqslant8. \end{cases}
\end{gather*}
Then,
\begin{gather*}
\int_{A_{k,a}}\phi_i(x_1,x_2)\frac{dx_1dx_2}{dg}\\
\qquad{} =\begin{cases}
\displaystyle \big(1-\eta_3^{-2}\big)\big(1-\eta_5^i\big)
\frac{\eta_5^{ki}}{15}\eta_3^a\mu^{\frac i5-\frac23}B\left(\frac i5,\frac13\right)&\text{if }1\leqslant i\leqslant4,\\
\displaystyle \big(1-\eta_3^{-1}\big)\big(1-\eta_5^{i-4}\big)\frac{\eta_5^{k(i-4)}}{15}\eta_3^{2a}\mu^{\frac {i-4}5-\frac13}B\left(\frac {i-4}5,\frac23\right)&\text{if }5\leqslant i\leqslant8. \end{cases}
\end{gather*}
Let $\alpha_{k,a}=\Sigma A_{k,a}$ be the suspension. Recalling formula
\eqref{susp-period} and using \eqref{integral of mu to get beta function}
\begin{align*}\big(I^{(-1)}_{\alpha_{k,a}}(\lambda),\phi_{i}\big):={} &\frac1{2\pi}\int_{\alpha_{k,a}}\phi_i\frac{\omega}{df}=\frac1\pi\partial_\lambda \int_0^\lambda(\lambda-\mu)^\frac12\int_{A_{k,a}}\phi_i(x_1,x_2)\frac{dx_1dx_2}{dg}d\mu\\
={} &\begin{cases}\displaystyle \frac{\sqrt 3}{15\pi}\eta_3^ae^{-\frac\pi 6\mathbf{i}}\lambda^{\frac i5-\frac16}\frac{\Gamma\big(\frac32\big)\Gamma\big(\frac i5\big)\Gamma\big(\frac13\big)}{\Gamma\big(\frac i5+\frac56\big)}\eta_5^{ki}\big(1-\eta_5^i\big)&\text{if }1\leqslant i\leqslant4,\vspace{1mm}\\
\displaystyle \frac{\sqrt 3}{15\pi}\eta_3^{2a}e^{\frac\pi 6\mathbf{i}}\lambda^{\frac {i-4}5+\frac16}\frac{\Gamma\big(\frac32\big)\Gamma\big(\frac {i-4}5\big)\Gamma\big(\frac23\big)}{\Gamma\big(\frac {i-4}5+\frac76\big)}\eta_5^{k(i-4)}\big(1-\eta_5^{i-4}\big)&\text{if }5\leqslant i\leqslant8.
\end{cases}
\end{align*}
Recalling the formulas for residue pairing, we get
\[ I^{(-1)}_{\alpha_{k,a}}(\lambda)=h\sum_{i=1}^8\big(I^{(-1)}_{\alpha_{k,a}}(\lambda),\phi_{9-i}\big)\phi_i.\]
By definition,
\begin{gather*}
\theta(\phi_i)=
\begin{cases}
\displaystyle \left(\frac 23-\frac i5\right)\phi_i,&
\text{if }1\leqslant i\leqslant4,\vspace{1mm}\\
\displaystyle \left(\frac13-\frac{i-4}5\right)\phi_i,&
\text{if }5\leqslant i\leqslant8.
\end{cases}
\end{gather*}
Therefore, recalling formula \eqref{psi-def}, we get
\begin{gather}
\Psi(\alpha_{k,a})= \sqrt{\frac 3\pi}\sum_{i=1}^4e^{\frac\pi
 6\mathbf{i}}{\eta_3^{2a}}\Gamma\left(1-\frac
 {i}5\right)\Gamma\left(\frac23\right)\eta_5^{-ki}\big(1-\eta_5^{-i}\big)\phi_i\notag\\
\hphantom{\Psi(\alpha_{k,a})=}{} +\sqrt{\frac 3\pi}\sum_{i=5}^8e^{-\frac\pi 6\mathbf{i}}{\eta_3^{a}}\Gamma\left(1-\frac {i-4}5\right)\Gamma\left(\frac13\right)\eta_5^{k(4-i)}\big(1-\eta_5^{4-i}\big)\phi_i.\label{E8-psi}
\end{gather}
Recalling formula \eqref{Psi of classical monodromy operator}, we get
that the following formulas for the classical monodromy operator:
\[ \sigma(\alpha_{k,a})=-\alpha_{k+1,a+1},\qquad k\in\mathbb Z_5, \qquad a\in\mathbb Z_3.
\]
Recalling formula \eqref{ip-formula}, the intersection pairing
\begin{align*}
 (\alpha_{k,a}|\alpha_{l,b})&=\frac1\pi(\Psi(\alpha_{k,a}),\cos(\pi\theta)\Psi(\alpha_{l,b}))\\
& =\frac1{10}\sum_{i=1}^4\big(\eta_3^{b-a}\eta_5^{(l-k)i}+\eta_3^{a-b}\eta_5^{(k-l)i}\big)
 \frac{\cos\big(\big(\frac i5-\frac23\big)\pi\big)}{\sin\big(\frac i 5\pi\big)\sin\big(\frac\pi 3\big)}\big(2-\eta_5^{-i}-\eta_5^{i}\big)\\
& =\frac45\sum_{i=1}^4\cos\left(\frac23(b-a)\pi+\frac {2i}5(l-k)\pi\right)\frac{\cos\big(\big(\frac i5-\frac23\big)\pi\big)}{\sin\big(\frac\pi 3\big)}\sin\left(\frac i 5\pi\right).
\end{align*}
Let us identify $\mathbb{Z}_3\setminus\{0\}=\{1,2\}$ and
$\mathbb{Z}_5\setminus\{0\}=\{1,2,3,4\}$. Every $1\leq a^\prime\leq 8$ can be
written uniquely in the form
$a^\prime=4(a-1)+k$, where $1\leq a\leq 2$ and $1\leq k\leq 4$. Put
$\alpha_{a'}:=\alpha_{k,a}$. Then the intersection matrix
$(\alpha_{a^\prime}|\alpha_{b^\prime})$ takes the following form:
\[\begin{pmatrix}
2 & -1 & 0 & 0 & -1 & 0 & 0 & 0\\
-1 & 2 & -1 & 0 & 1 & -1 & 0 & 0\\
0 & -1 & 2 & -1 & 0 & 1 & -1 & 0\\
0 & 0 & -1 & 2 & 0 & 0 & 1 & -1\\
-1 & 1 & 0 & 0 & 2 & -1 & 0 & 0\\
0 & -1 & 1 & 0 & -1 & 2 & -1 & 0\\
0 & 0 & -1 & 1 & 0 & -1 & 2 & -1\\
0 & 0 & 0 & -1 & 0 & 0 & -1 & 2
\end{pmatrix}.\]
The above matrix has determinant $1$. Since the determinant of the
Cartan matrix of the root system of type $E_8$ is also 1, the
conclusion is the same as in the previous cases.

\section{K-theoretic interpretation}\label{sec:K}

The goal of this section is to prove Theorem~\ref{t1}.
\subsection{Fermat cases}

Let us compute explicitly the map $\operatorname{ch}_\Gamma$ for
$f(x)=f^T(x)= x_1^{a_1}+x_2^{a_2}+x_3^{a_3}$ with $a_3=2$. In fact,
our computation works for arbitrary $a_3$ as well, except for one
small technical detail, that is, we will prove that the group $K^{-1}_{G^T}\big(V^T_1\big)$ is
torsion free. This fact should be true for any positive integer $a_3$,
but the argument that we give works only if $a_3=2$. The group
\begin{gather*}
G^T=\big\{g=(g_1,g_2,g_3)\in (\mathbb{C}^*)\, |\, g_1^{a_1}=g_2^{a_2}=g_3^{a_3}=1\big\}.
\end{gather*}
If $g\in G^T$ is such that $I=\{i\, |\, g_i=1\}$ is a non-empty set,
then it is easy to see that the map $x=(x_1,x_2,x_3)\mapsto
\big(x_i^{a_i}\big)_{i\in I}$ induces isomorphisms
$\operatorname{Fix}_g\big(\mathbb{C}^3\big)/G^T\cong \mathbb{C}^I$ and
$\operatorname{Fix}_g\big(V^T_1\big)/G^T\cong H_I$, where $H_I\subset \mathbb{C}^I$ is
the hyperplane $\sum_{i\in I} y_i=1$. Since the pair $\big(\mathbb{C}^I,H_I\big)$ is
contractible the groups
\begin{gather*}
K^0\big( \operatorname{Fix}_g\big(\mathbb{C}^3\big)/G^T,
\operatorname{Fix}_g\big(V_1^T\big)/G^T\big)=
H^*\big( \operatorname{Fix}_g\big(\mathbb{C}^3\big)/G^T,
\operatorname{Fix}_g\big(V_1^T\big)/G^T\big) = 0.
\end{gather*}
If $g\in G^T$ is such that $g_i\neq 1$ for all $i$, then
$\operatorname{Fix}_g\big(\mathbb{C}^3\big) = \{0\}$ and
$\operatorname{Fix}_g\big(V^T_1\big) = \varnothing$. Note that the number of
such $g$ is $N=(a_1-1)(a_2-1)(a_3-1)$, that is, the multiplicity of
the singularity corresponding to the polynomial~$f$.
\begin{Lemma}\label{le:fermat-tor}
 The group $K^{-1}_{G^T}\big(V_1^T\big)$ is torsion free.
\end{Lemma}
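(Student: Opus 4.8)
The plan is to prove the stronger statement that $K^{-1}_{G^T}\big(V_1^T\big)=0$, which of course makes the lemma trivial. The starting point is Pham's description of the Milnor fibre. Writing $\Sigma_a:=\{\zeta\in\mathbb C\,:\,\zeta^a=1\}$, which carries the free, transitive $\mu_a$-action by multiplication, the map
\begin{gather*}
\Sigma_{a_1}*\Sigma_{a_2}*\Sigma_{a_3}\longrightarrow V_1^T,\qquad \textstyle\sum_{i}t_i[\zeta_i]\longmapsto \big(t_i^{1/a_i}\zeta_i\big)_{i=1}^3,
\end{gather*}
identifies the join with the Pham polyhedron, and Milnor's deformation retraction of $V_1^T$ onto this polyhedron only rescales the arguments of the coordinates $x_i^{a_i}$, so it can be arranged to commute with the coordinatewise $G^T$-action (alternatively one replaces it by an equivariant Morse-theoretic retraction). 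Hence $V_1^T\simeq_{G^T}J:=\Sigma_{a_1}*\Sigma_{a_2}*\Sigma_{a_3}$, where $G^T=\mu_{a_1}\times\mu_{a_2}\times\mu_{a_3}$ acts factorwise.

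Next I would run an equivariant Mayer--Vietoris argument on the join. Write $J=X*Y$ with $X=\Sigma_{a_1}$ and $Y=\Sigma_{a_2}*\Sigma_{a_3}$; the standard cover of the join has pieces $G^T$-homotopy equivalent to $X$ and to $Y$, with intersection $G^T$-homotopy equivalent to $X\times Y$. Since $\mu_{a_1}$ acts freely and transitively on $X$, one has $X\cong G^T/(\mu_{a_2}\times\mu_{a_3})$ and $X\times Y\cong G^T\times_{\mu_{a_2}\times\mu_{a_3}}Y$, so the induction isomorphism for equivariant K-theory gives $K^*_{G^T}(X)=R(\mu_{a_2}\times\mu_{a_3})$ and $K^*_{G^T}(X\times Y)=K^*_{\mu_{a_2}\times\mu_{a_3}}(Y)$ (here $R(-)$ is the representation ring), both concentrated in even degree; and since the $G^T$-action on $Y$ factors through $\mu_{a_2}\times\mu_{a_3}$, the subgroup $\mu_{a_1}$ acts trivially on $Y$ and therefore $K^*_{G^T}(Y)=R(\mu_{a_1})\otimes_{\mathbb Z}K^*_{\mu_{a_2}\times\mu_{a_3}}(Y)$.

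It remains to understand $K^*_{\mu_{a_2}\times\mu_{a_3}}(Y)$, and the very same argument applies: for $Y=\Sigma_{a_2}*\Sigma_{a_3}$ the cover has pieces equivariantly equivalent to $\Sigma_{a_3}$ and $\Sigma_{a_2}$ and intersection $\Sigma_{a_2}\times\Sigma_{a_3}\cong\mu_{a_2}\times\mu_{a_3}$ (a single free orbit), whose K-theories are $R(\mu_{a_2})$, $R(\mu_{a_3})$ and $\mathbb Z$, all with vanishing $K^{-1}$; the Mayer--Vietoris restriction map $R(\mu_{a_2})\oplus R(\mu_{a_3})\to K^0_{\mu_{a_2}\times\mu_{a_3}}(\Sigma_{a_2}\times\Sigma_{a_3})=\mathbb Z$ is the (surjective) dimension map, so $K^{-1}_{\mu_{a_2}\times\mu_{a_3}}(Y)=0$ and $K^0_{\mu_{a_2}\times\mu_{a_3}}(Y)$ is free. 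Feeding this back, every odd-degree group occurring in the six-term Mayer--Vietoris sequence for $J$ vanishes, so it collapses to
\begin{gather*}
K^0_{G^T}(Y)\oplus K^0_{G^T}(X)\ \xrightarrow{\ \rho\ }\ K^0_{G^T}(X\times Y)\ \longrightarrow\ K^{-1}_{G^T}(J)\ \longrightarrow\ 0 .
\end{gather*}
The first component of $\rho$ is the pullback $\mathrm{pr}_Y^*\colon K^0_{G^T}(Y)\to K^0_{G^T}(X\times Y)$, which under the identifications above becomes $\varepsilon\otimes\mathrm{id}$, where $\varepsilon\colon R(\mu_{a_1})\to\mathbb Z$ is the augmentation (each character of the abelian group $\mu_{a_1}$ maps to $1$); this is onto, hence $\rho$ is onto, so $K^{-1}_{G^T}\big(V_1^T\big)=K^{-1}_{G^T}(J)=\operatorname{coker}\rho=0$, which is in particular torsion free.

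The main obstacle is really only the first step: making the identification $V_1^T\simeq_{G^T}\Sigma_{a_1}*\Sigma_{a_2}*\Sigma_{a_3}$ genuinely equivariant, i.e.\ checking that Pham's/Milnor's retraction respects the $G^T$-action (or constructing an equivariant one). Everything after that is routine bookkeeping with induced spaces and representation rings; note that, phrased this way, the argument does not actually use $a_3=2$.
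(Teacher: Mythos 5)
Your argument is correct, and it takes a genuinely different route from the paper's. The paper first uses the suspension isomorphism in equivariant K-theory to reduce to the curve $M=\big\{x_1^{a_1}+x_2^{a_2}=1\big\}$ with its group $A$ of diagonal symmetries --- this reduction is exactly where the hypothesis $a_3=2$ enters --- and then studies $M$ via the branched covering $(x_1,x_2)\mapsto x_1^{a_1}$: excision and the Thom isomorphism at the ramification orbit $R$ give $K^{-1}_A(M,M\setminus R)=K^{-1}_A(R)=0$, so the long exact sequence of the pair embeds $K^{-1}_A(M)$ into $K^{-1}_A(M\setminus R)\cong\mathbb{Z}$, whence torsion freeness (the paper only remarks in passing that one could push this to get actual vanishing). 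You instead replace $V_1^T$ by its $G^T$-equivariant Pham model $\Sigma_{a_1}*\Sigma_{a_2}*\Sigma_{a_3}$ and run an iterated equivariant Mayer--Vietoris on the join, using the induction isomorphism for free orbits and the splitting $K^*_{G_1\times G_2}(Y)\cong R(G_1)\otimes K^*_{G_2}(Y)$ when $G_1$ acts trivially; the surjectivity of the augmentation $R(\mu_{a_1})\to\mathbb{Z}$ then kills the cokernel computing $K^{-1}$. What your approach buys is the stronger statement $K^{-1}_{G^T}\big(V_1^T\big)=0$ for \emph{arbitrary} exponents $(a_1,a_2,a_3)$, removing precisely the restriction $a_3=2$ that the authors flag as a limitation of their method; what it costs is the one genuinely non-routine input you correctly identify, namely that Pham's deformation retraction of the Milnor fibre onto the join can be taken $G^T$-equivariantly. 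That fact is standard (the retraction is lifted from a retraction of the hyperplane $\sum u_i=1$ onto the simplex along the $G^T$-invariant map $x\mapsto\big(x_i^{a_i}\big)$, so it commutes with the diagonal action), but since it carries the whole proof it deserves either a precise reference or the explicit formula.
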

Let us postpone the proof of this lemma until Section~\ref{sec:fermat-tor}.
Note that $K^{-1}_{G^T}\big(V_1^T\big)\otimes \mathbb{C}=0$. Therefore, according to
the above Lemma~\ref{le:fermat-tor}, we have $K^{-1}_{G^T}\big(V^T_1\big)=0$.
The long exact sequence of the pair $\big(\mathbb{C}^3,V^T_1\big)$
yields the following exact sequence
\begin{gather*}
\xymatrix{
0\ar[r] &
K^0_{G^T}\big(\mathbb{C}^3,V^T_1\big)\ar[r] &
K^0_{G^T}\big(\mathbb{C}^3\big) \ar[r] &
K^0_{G^T}\big(V^T_1\big).}
\end{gather*}
On the other hand, $K^0_{G^T}\big(\mathbb{C}^3\big)$ coincides with the
representation ring of $G^T$, that is,
\begin{gather*}
K^0_{G^T}\big(\mathbb{C}^3\big) =
\mathbb{Z}[L_1,L_2,L_3]/\big(L_1^{a_1}-1,L_2^{a_2}-1,L_3^{a_3}-1\big),
\end{gather*}
where $L_i=\mathbb{C}^3\times \mathbb{C}$ is the trivial bundle with $G^T$-action
$g\cdot (x,\lambda):= (gx,g_i\lambda)$. Note that $T\mathbb{C}^3\cong
L_1+L_2+L_3$ in the category of $G^T$-equivariant bundles. We claim that
\begin{gather}\label{fermat-K}
K^0_{G^T}\big(\mathbb{C}^3,V^T_1\big) =
(L_1-1)(L_2-1)(L_3-1)
\mathbb{Z}[L_1,L_2,L_3]/\big(L_1^{a_1}-1,L_2^{a_2}-1,L_3^{a_3}-1\big).
\end{gather}
Indeed, note that $s_i(x)=(x,f_{x_i})$ is a $G^T$-equivariant
section of $L_i^{-1}$. The Koszul complex corresponding to the
sequence $(s_1,s_2,s_3)$ has the form
\begin{gather*}
\xymatrix{
L_1L_2L_3\ar[r] &
\bigoplus_{1\leq i<j\leq 3} L_iL_j \ar[r] &
\bigoplus_{1\leq i\leq 3} L_i \ar[r] &
\underline{\mathbb{C}},}
\end{gather*}
where $\underline{\mathbb{C}}$ is the trivial bundle with trivial
$G^T$-action. The sequence $(s_1,s_2,s_3)$ is regular, so the
corresponding Koszul complex is a resolution of the structure sheaf of
the zero locus $\{s_1=s_2=s_3=0\}$. The zero locus is $\{0\}$ and
since $0\notin V^T_1$ the restriction of the Koszul complex to $V^T_1$
is exact, i.e., the Koszul complex represents an element of
$K^0_{G^T}\big(\mathbb{C}^3,V^T_1\big)$. This proves that the RHS of \eqref{fermat-K}
is a $\mathbb{Z}$-submodule of the LHS. Note that both the LHS and the RHS of~\eqref{fermat-K} are free $\mathbb{Z}$-modules of rank $N$. Therefore, the
quotient of LHS by RHS is a finite Abelian group. In order to prove
that the quotient is 0, it is sufficient
to prove that if $g\in K^0_{G^T}\big(\mathbb{C}^3\big)$ and $m g$ belongs to the RHS
of~\eqref{fermat-K} for some integer~$m$, then $g$ belongs to the RHS
of~\eqref{fermat-K} too. The proof is straightforward so we leave it
as an exercise.

Let us fix the following basis of $K^0_{G^T}\big(\mathbb{C}^3,V^T_1\big)$:
\begin{gather*}
A_{m_1,m_2,m_3}:=
L_1^{m_1}L_2^{m_2}L_3^{m_3}
(L_1-1)(L_2-1)(L_3-1),\qquad 0\leq m_i\leq a_i-2.
\end{gather*}
Let $e_{k_1,k_2,k_3}=1\in H^0\big(\operatorname{Fix}_g\big(\mathbb{C}^3\big)/G,
\operatorname{Fix}_g\big(V^T_1\big)/G \big)$, where
$g=\big(e^{2\pi\mathbf{i} k_1/a_1}, e^{2\pi\mathbf{i} k_2/a_2}, e^{2\pi\mathbf{i} k_3/a_3}\big)$.
We get
\begin{gather*}
\operatorname{ch}_\Gamma(A_{m_1,m_2,m_3}) = \frac{1}{2\pi}
\sum_{k_1=1}^{a_1-1} \sum_{k_2=1}^{a_2-1}\sum_{k_3=1}^{a_3-1}
\prod_{i=1}^3 \left(
\Gamma\left(1-\frac{k_i}{a_i}\right) e^{-2\pi\mathbf{i}
 k_i m_i/a_i }
\big(e^{-2\pi\mathbf{i} k_i/a_i}-1\big) \right) e_{k_1,k_2,k_3},
\end{gather*}
where the ingredients of the above formula are computed as
follows. Since
$\operatorname{Fix}_g\big(\mathbb{C}^3\big)=\{0\}$ and the action of $g$ on
$L_i|_{\operatorname{Fix}_g(\mathbb{C}^3)}$ is given by multiplication by
$e^{2\pi\mathbf{i} k_i/a_i}$ we get
\begin{gather*}
\widehat{\Gamma}(L_i)|_{\operatorname{Fix}_g(\mathbb{C}^3)/G^T}=
\Gamma\left(1-\frac{k_i}{a_i}\right)\qquad\mbox{and}\\
\iota^*\widetilde{\operatorname{ch}}(L_i)|_{\operatorname{Fix}_g(\mathbb{C}^3)/G^T}=
\widetilde{\operatorname{ch}}\big(L^{-1}_i\big)|_{\operatorname{Fix}_g(\mathbb{C}^3)/G^T}=
e^{-2\pi\mathbf{i} k_i/a_i },
\end{gather*}
where we used that $\iota^*(L_i)=L_i^{-1}$. The orbifold tangent
bundle $\big[T\mathbb{C}^3/G^T\big]=L_1+L_2+L_3$ so its $\Gamma$-class is
$\prod_{i=1}^3 \widehat{\Gamma}(L_i)$, while
$\iota^*\widetilde{\operatorname{ch}}|_{\operatorname{Fix}_g(\mathbb{C}^3)/G^T}$
is a ring homomorphism, so the computation of its value on
$A_{m_1,m_2,m_3}$ amounts to the substitution
$L_i\mapsto e^{-2\pi\mathbf{i} k_i/a_i }$.
Let us specialize the above formula to the cases of $A_N$, $E_6$, and
$E_8$ singularities. In the first case $a_1=N+1$, $a_2=a_3=2$. The
above formula takes the form
\begin{gather*}
\operatorname{ch}_\Gamma(A_{m,0,0}) =
2\sum_{k=1}^N \eta^{-km}\big(\eta^{-k}-1\big) \Gamma\left(1-\frac{k}{N+1}\right)
e_{k,1,1}.
\end{gather*}
Comparing with \eqref{A-psi}, we get that if we define
$\operatorname{mir}(\phi_i) = e_{i,1,1}$ ($1\leq i\leq N$), then the
images of~$\Psi$ and $\operatorname{ch}_\Gamma$ will coincide. The
vanishing cycle $\alpha_{k,a}$ corresponds to $(-1)^a A_{k,0,0}$.

For the case of $E_6$ we have $a_1=3$, $a_2=4$, $a_3=2$. The formula
takes the form
\begin{gather*}
\operatorname{ch}_\Gamma(A_{m_1,m_2,0}) =
-\frac{1}{\sqrt{\pi}} \sum_{k_1=1}^2 \sum_{k_2=1}^3
\Gamma\left(1-\frac{k_1}{3}\right)
\Gamma\left(1-\frac{k_2}{4}\right)\\
\hphantom{\operatorname{ch}_\Gamma(A_{m_1,m_2,0}) =}{}\times
\eta_3^{-k_1m_1} \eta_4^{-k_2m_2}
\big(\eta_3^{-k_1}-1\big)\big(\eta_4^{-k_2}-1\big)
e_{k_1,k_2,1},
\end{gather*}
where $\eta_3=e^{2\pi\mathbf{i}/3}$ and $\eta_4=e^{2\pi\mathbf{i}/4}=\mathbf{i}$. Note that
$\eta_3^{-1}-1= -\sqrt{3} e^{\pi\mathbf{i}/6}$ and $\eta_3^{-2}-1= -\sqrt{3}
e^{-\pi\mathbf{i}/6}$. Comparing with \eqref{E6-psi} we get that if we define{\samepage
\begin{gather*}
\operatorname{mir}(\phi_i) =
\begin{cases}
e_{1,i,1}, & \mbox{for } 1\leq i\leq 3, \\
e_{2,i-3,1}, & \mbox{for } 4\leq i\leq 6,
\end{cases}
\end{gather*}
then the images of $\Psi$ and $\operatorname{ch}_\Gamma$ will
coincide. The vanishing cycle $\alpha_{k,a}$ corresponds to
$-A_{a,k,0}$.}

Suppose now that the singularity is of type $E_8$, that is, $a_1=3$,
$a_2=5$, and $a_3=2$. The formula takes the form
\begin{gather*}
\operatorname{ch}_\Gamma(A_{m_1,m_2,0})=
-\frac{1}{\sqrt{\pi}}
\sum_{k_1=1}^2 \sum_{k_2=1}^4
\Gamma\left(1-\frac{k_1}{3}\right)
\Gamma\left(1-\frac{k_2}{5}\right)\\
\hphantom{\operatorname{ch}_\Gamma(A_{m_1,m_2,0})=}{}\times
\eta_3^{-k_1m_1} \eta_5^{-k_2m_2}
\big(\eta_3^{-k_1}-1\big)\big(\eta_5^{-k_2}-1\big) e_{k_1,k_2,1},
\end{gather*}
where $\eta_3=e^{2\pi\mathbf{i}/3}$ and $\eta_5=e^{2\pi\mathbf{i}/5}$. Comparing
with formula \eqref{E8-psi} we get that if we define
\begin{gather*}
\operatorname{mir}(\phi_i) =
\begin{cases}
e_{1,i,1}, & \mbox{for } 1\leq i\leq 4, \\
e_{2,i-4,1}, & \mbox{for } 5\leq i\leq 8,
\end{cases}
\end{gather*}
then the images of $\Psi$ and $\operatorname{ch}_\Gamma$ will
coincide. The vanishing cycle $\alpha_{k,a}$ corresponds to
$-A_{a,k,0}$.

\subsection{Suspension isomorphism}
Suppose that $X$ is a finite CW-complex equipped with an action of a
finite (or more generally compact Lie) group $G$. Let
$\mathbbm{\mu}_2=\{\pm 1\}$ and
\begin{gather*}
\Sigma X=
\bsfrac{\ X\times [-1,1] }{X\times \{-1\}}
\nicefrac{\ }{X\times \{1\}}
\end{gather*}
be the suspension of $X$, where the double quotient simply means that
the quotient is taken in two steps: first by, say, $X\times \{-1\}$ and
then by $X\times \{1\}$. Note that $G\times \mathbbm{\mu}_2$ acts
naturally on~$\Sigma X$ via~$(g,\epsilon)\cdot (x,t)=(gx,\epsilon
t)$ and that the 0-dimensional sphere $\mathbbm{S}^0:=\Sigma
X-{X\times (-1,1)}$ is a~$G\times \mathbbm{\mu}_2$-equivariant
subcomplex of~$\Sigma X$.

Let $L=[-1,1]\times \mathbb{C}$ be the trivial $\mathbbm{\mu}_2$-equivariant line
bundle on the interval $I:=[-1,1]$, where the representation of
$\mathbbm{\mu}_2$ on $\mathbb{C}$ is given by $\epsilon \cdot \lambda =
\epsilon \lambda$. It is an easy and amusing exercise to check
that $K^0_{\mathbbm{\mu}_2}(I,\partial I) = \mathbb{Z}\, \ell$, where $\ell$
is the relative K-theoretic class of the complex
$\xymatrix{L\ar[r]^{t}&\underline{\mathbb{C}}}$, where $\underline{\mathbb{C}}=I\times \mathbb{C}$ is
the trivial $\mathbbm{\mu}_2$-equivariant line bundle
corresponding to the trivial representation of~$\mathbbm{\mu}_2$ on~$\mathbb{C}$ and the map is induced by $(t,\lambda)\mapsto (t,t\lambda)$.
\begin{Lemma}\label{le:susp}
 The exterior tensor product by $\ell$ induces an isomorphism
\[ K^i_G(X)\cong K^i_{G\times \mathbbm{\mu}_2}\big(\Sigma X,\mathbbm{S}^0\big).\]
\end{Lemma}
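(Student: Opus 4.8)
The plan is to construct the isomorphism via exterior multiplication by the class $\ell \in K^0_{\mathbbm{\mu}_2}(I,\partial I)$ and then verify it is an isomorphism by a reduction to the known structure of $K^*_{\mathbbm{\mu}_2}(I,\partial I)$. First I would note that $\Sigma X$ is obtained from $X \times I$ by collapsing $X \times \{-1\}$ and $X \times \{1\}$ to points, and that under this collapse $\mathbbm{S}^0$ is the image of $X \times \partial I$. Working with the pair $(X \times I, X \times \partial I)$ instead, one has the standard identification $K^i_{G \times \mathbbm{\mu}_2}(\Sigma X, \mathbbm{S}^0) \cong \widetilde{K}^i_{G \times \mathbbm{\mu}_2}(\Sigma X / \mathbbm{S}^0)$, and the quotient space $(\Sigma X)/\mathbbm{S}^0$ agrees with $(X \times I)/(X \times \partial I)$ up to $G \times \mathbbm{\mu}_2$-equivariant homotopy, so it suffices to produce an isomorphism $K^i_G(X) \cong K^i_{G \times \mathbbm{\mu}_2}(X \times I, X \times \partial I)$ given by exterior product with $\ell$.

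Next I would invoke the relative/external product structure in equivariant K-theory: for the pair $(I, \partial I)$ with its $\mathbbm{\mu}_2$-action and any $G$-space $X$, there is a natural external product map
\[
K^i_G(X) \otimes_{\mathbb{Z}} K^0_{\mathbbm{\mu}_2}(I, \partial I) \;\longrightarrow\; K^i_{G \times \mathbbm{\mu}_2}(X \times I, X \times \partial I),
\]
(here $X$ is viewed with trivial $\mathbbm{\mu}_2$-action and $I$ with trivial $G$-action, so $G \times \mathbbm{\mu}_2$ acts on the product). Since the paper records that $K^0_{\mathbbm{\mu}_2}(I, \partial I) = \mathbb{Z}\,\ell$ is free of rank one on $\ell$, the map $a \mapsto a \boxtimes \ell$ gives a candidate homomorphism $K^i_G(X) \to K^i_{G \times \mathbbm{\mu}_2}(X \times I, X \times \partial I)$, which is exactly the claimed map after passing back to $\Sigma X$.

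To see this candidate is an isomorphism I would use the Künneth-type / Thom-isomorphism argument. Regard $\ell$ as the K-theoretic Euler class of the $\mathbbm{\mu}_2$-equivariant line bundle $L \to I$ relative to $\partial I$, i.e., $L$ is trivialized (by the section $t$) over the endpoints, so $\ell$ is a relative Thom class. Then $(X \times I, X \times \partial I)$ is the Thom space construction for the pullback bundle $L$ over $X$, and the Thom isomorphism theorem in equivariant K-theory (valid because $L$ is a line bundle that becomes trivial over the subcomplex where we take the relative group) gives precisely that multiplication by $\ell$ is an isomorphism. Alternatively, and perhaps more elementarily for this setting, I would argue directly: build a cofiber sequence by writing $I = [-1,0] \cup [0,1]$, use excision and the long exact sequence of a pair together with the fact that $(X \times [-1,0], X \times \{-1\})$ and $(X \times [0,1], X \times \{1\})$ are each $G \times \mathbbm{\mu}_2$-equivariantly contractible onto $X \times \{0\}$ (the $\mathbbm{\mu}_2$-action being relevant only through the identification of the two halves), to reduce the computation of $K^i_{G \times \mathbbm{\mu}_2}(X \times I, X \times \partial I)$ to a single copy of $K^i_G(X)$, and then check on a generator that this identification is realized by $\boxtimes \ell$.

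The main obstacle I expect is the bookkeeping of the $\mathbbm{\mu}_2$-equivariance: ordinary suspension isomorphisms in (equivariant) K-theory are standard, but here the extra $\mathbbm{\mu}_2$ acts by the sign on the suspension coordinate and hence nontrivially on the relative class $\ell$, so one must be careful that the external product lands in the $G \times \mathbbm{\mu}_2$-equivariant group (not merely the $G$-equivariant one) and that the rank-one freeness of $K^0_{\mathbbm{\mu}_2}(I, \partial I)$ on $\ell$ is what makes $\boxtimes \ell$ an isomorphism rather than, say, a map onto a proper summand. Concretely this amounts to verifying that the reduction of $K^*_{G \times \mathbbm{\mu}_2}(X \times I, X \times \partial I)$ produced above is a \emph{free} $K^0_{\mathbbm{\mu}_2}(I,\partial I)$-module of rank one over $K^*_G(X)$, which follows from the local triviality of $L$ near $\partial I$ and the corresponding splitting, but needs to be spelled out to conclude that $\boxtimes \ell$ is surjective as well as injective.
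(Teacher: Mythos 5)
Your reduction of the statement to the pair $(X\times I, X\times \partial I)$ via $K^i_{G\times \mathbbm{\mu}_2}(\Sigma X,\mathbbm{S}^0)\cong \widetilde{K}^i_{G\times\mathbbm{\mu}_2}(\Sigma X/\mathbbm{S}^0)\cong K^i_{G\times\mathbbm{\mu}_2}(X\times I, X\times\partial I)$ is exactly the first step of the paper's proof, and your identification of the candidate map as the external product with $\ell$ is also the right one. Where the argument falls short is the justification that this external product is an isomorphism. The Thom-isomorphism route as you describe it does not apply: $L$ is a \emph{complex} line bundle over $I$, so its fiberwise Thom space is a $2$-sphere, whereas $(X\times I)/(X\times\partial I)\simeq X_+\wedge (I/\partial I)$ is a single (real) suspension coordinate; equivalently, $I/\partial I$ is the representation sphere of the one-dimensional real sign representation of $\mathbbm{\mu}_2$, which carries no complex structure, so $(X\times I, X\times\partial I)$ is not the Thom space of a pullback of $L$ and the complex $K$-theoretic Thom isomorphism cannot be invoked. (Indeed, the fact that $K^0_{\mathbbm{\mu}_2}(I,\partial I)\cong\mathbb{Z}$ rather than the full representation ring is a genuinely special computation, not an instance of Thom/Bott.) Your alternative excision argument is the right kind of idea, but as stated it does not parse: the half-pairs $(X\times[0,1],X\times\{1\})$ are not $G\times\mathbbm{\mu}_2$-invariant (the involution swaps the two halves), and the crucial point --- identifying the connecting homomorphism, or equivalently verifying that the relative group is free of rank one on $\ell$ over $K^*_G(X)$ --- is precisely what you defer with ``needs to be spelled out.''

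The paper closes this gap in one stroke by citing the equivariant K\"unneth formula of Minami: since $K^*_{\mathbbm{\mu}_2}(I,\partial I)$ is $\mathbb{Z}$ in even degrees and $0$ in odd degrees, there are no Tor terms and the external product
\begin{gather*}
K^i_G(X)\otimes K^0_{\mathbbm{\mu}_2}(I,\partial I)\longrightarrow K^i_{G\times\mathbbm{\mu}_2}(X\times I,X\times\partial I)
\end{gather*}
is an isomorphism; combined with $K^0_{\mathbbm{\mu}_2}(I,\partial I)=\mathbb{Z}\,\ell$ this is the lemma. So your proof has the same skeleton but is missing the one citation (or a complete substitute computation) that makes the key step go through; replacing your two sketched finishes with the K\"unneth reference would repair it.
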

\begin{proof}
By definition
\begin{gather*}
K^i_{G\times \mathbbm{\mu}_2}\big(\Sigma X,\mathbbm{S}^0\big) =
\widetilde{K}^i_{G\times \mathbbm{\mu}_2}\big(\Sigma X/\mathbbm{S}^0\big) =
K^i_{G\times \mathbbm{\mu}_2} (X\times I, X\times \partial I)
\cong K^i_G(X)\otimes K^0_{\mathbbm{\mu}_2}(I,\partial I),
\end{gather*}
where we used that $K^i_{\mathbbm{\mu}_2}(I,\partial I)$ is isomorphic to~$\mathbb{Z}$
for~$i$ even and~$0$ for $i$ odd, so the last isomorphism is given by
the equivariant K\"unneth formula (see~\cite{MR259897}).
\end{proof}

Suppose now that $Y\subset X$ is a $G$-invariant CW-subcomplex of
$X$. Using the long exact sequence of the triple
$\mathbbm{S}^0\subset \Sigma Y\subset \Sigma X$ and Lemma~\ref{le:susp}, it is straightforward to prove the following corollary.
\begin{Corollary}\label{cor:susp}
The exterior tensor product by $\ell$ induces an isomorphism
\begin{gather*}
K^i_G(X,Y)\cong K^i_{G\times \mathbbm{\mu}_2}(\Sigma X,\Sigma Y).
\end{gather*}
\end{Corollary}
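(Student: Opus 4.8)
The plan is to deduce the corollary from Lemma~\ref{le:susp} by comparing two long exact sequences and invoking the five lemma. Exterior tensor product with the fixed class $\ell\in K^0_{\mathbbm{\mu}_2}(I,\partial I)$ is a natural operation, so for the pair $(X,Y)$ it gives a homomorphism $K^i_G(X,Y)\to K^i_{G\times\mathbbm{\mu}_2}\big(X\times I,\,X\times\partial I\cup Y\times I\big)$. Since $\Sigma X/\Sigma Y$ is $(G\times\mathbbm{\mu}_2)$-equivariantly homeomorphic to $(X\times I)/(X\times\partial I\cup Y\times I)$ (both collapse the ends $X\times\partial I$ together with the tube $Y\times I$ to a single point), this amounts to a homomorphism
\[
\tau_{X,Y}\colon\ K^i_G(X,Y)\longrightarrow K^i_{G\times\mathbbm{\mu}_2}(\Sigma X,\Sigma Y).
\]
Applied to the pairs $(X,\varnothing)$ and $(Y,\varnothing)$, for which $\Sigma\varnothing=\mathbbm{S}^0$, the same construction recovers the isomorphisms $\sigma_X\colon K^i_G(X)\xrightarrow{\,\cong\,}K^i_{G\times\mathbbm{\mu}_2}(\Sigma X,\mathbbm{S}^0)$ and $\sigma_Y\colon K^i_G(Y)\xrightarrow{\,\cong\,}K^i_{G\times\mathbbm{\mu}_2}(\Sigma Y,\mathbbm{S}^0)$ of Lemma~\ref{le:susp}; in particular $\tau_{X,Y}$, $\sigma_X$ and $\sigma_Y$ are all given by exterior multiplication with the single class $\ell$.

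Next I would line up the long exact sequence of the pair $(X,Y)$ for $K^*_G$,
\[
\cdots\longrightarrow K^i_G(X,Y)\longrightarrow K^i_G(X)\longrightarrow K^i_G(Y)\longrightarrow K^{i+1}_G(X,Y)\longrightarrow\cdots,
\]
against the long exact sequence of the triple $\mathbbm{S}^0\subset\Sigma Y\subset\Sigma X$ for $K^*_{G\times\mathbbm{\mu}_2}$,
\[
\cdots\longrightarrow K^i_{G\times\mathbbm{\mu}_2}(\Sigma X,\Sigma Y)\longrightarrow K^i_{G\times\mathbbm{\mu}_2}(\Sigma X,\mathbbm{S}^0)\longrightarrow K^i_{G\times\mathbbm{\mu}_2}(\Sigma Y,\mathbbm{S}^0)\longrightarrow K^{i+1}_{G\times\mathbbm{\mu}_2}(\Sigma X,\Sigma Y)\longrightarrow\cdots,
\]
with vertical arrows $\tau_{X,Y}$, $\sigma_X$, $\sigma_Y$, $\tau_{X,Y}$. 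The middle arrow of the second sequence is induced by the inclusion $(\Sigma Y,\mathbbm{S}^0)\hookrightarrow(\Sigma X,\mathbbm{S}^0)$, which is the suspension of $Y\hookrightarrow X$; hence, by naturality of multiplication by $\ell$, it is carried by $\sigma_X$ and $\sigma_Y$ onto the restriction $K^i_G(X)\to K^i_G(Y)$, the middle arrow of the first sequence. Since multiplication by $\ell$ likewise commutes with the connecting homomorphisms of the pair and of the triple, the whole ladder commutes, that is, we obtain a morphism of long exact sequences.

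Finally, $\sigma_X$ and $\sigma_Y$ are isomorphisms by Lemma~\ref{le:susp}, so the five lemma, applied at $K^i_G(X,Y)$, forces $\tau_{X,Y}$ to be an isomorphism, which is precisely the claim. The only step requiring any care is the middle paragraph: checking that the middle arrow of the triple sequence is identified with the honest restriction $K^i_G(X)\to K^i_G(Y)$ and that the resulting square commutes. Both facts are routine consequences of the functoriality of the suspension construction $X\mapsto(\Sigma X,\mathbbm{S}^0)$ and of the naturality of the exterior product, so no real obstacle arises; the corollary is essentially Lemma~\ref{le:susp} packaged together with the long exact sequences of a pair and of a triple.
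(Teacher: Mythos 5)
Your argument is correct and is exactly the route the paper intends: the long exact sequence of the triple $\mathbbm{S}^0\subset\Sigma Y\subset\Sigma X$ compared with the long exact sequence of the pair $(X,Y)$, with Lemma~\ref{le:susp} supplying the isomorphisms on the absolute terms and the five lemma closing the argument. The naturality checks you flag are indeed the only points needing care, and they go through as you describe.
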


\subsection[The relative K-ring for $D_N$-singularity]{The relative K-ring for $\boldsymbol{D_N}$-singularity}
Let us return to the settings of $D_N$-singularity. We have
$f^T(x)=x_1^2+x_1x_2^{N-1}+x_3^2$.
The group of diagonal symmetries of $f^T$ is
\begin{gather*}
G^T=\big\{t\in (\mathbb{C}^*)^3\, |\, t_1^2=t_1t_2^{N-1} = t_3^2=1\big\}.
\end{gather*}
Let $L_i=\mathbb{C}^3\times \mathbb{C}$ be the $G^T$-equivariant line bundle for
which the action of $G^T$ on $\mathbb{C}$ is given by the character $G^T\to
\mathbb{C}^*$, $(t_1,t_2,t_3)\mapsto t_i$. Let us introduce the following $N$
complexes of $G^T$-equivariant vector bundles on $\mathbb{C}^3$:
\begin{gather*}
E_i^{\bullet}\colon \
\xymatrix{
L_1L_2^{i-1} L_3 \ar[r]^-{d_0} &
L_1L_2^{i-1} \oplus L_2^{i-1} L_3 \ar[r]^-{d_1} &
L_2^{i-1},}\qquad 1\leq i\leq N-1,
\end{gather*}
where the differentials are defined by
$d_0(x,\lambda) = (x,-x_3\lambda, x_1 \lambda)$ and
$d_1(x,\lambda_1,\lambda_3) = (x,x_1 \lambda_1 + x_3\lambda_3)$,
\begin{gather*}
E_N^{\bullet}\colon \
\xymatrix{
L_3\ar[r]^-{d_0}&
\underline{\mathbb{C}}\oplus L_3 \ar[r]^-{d_1} &
\underline{\mathbb{C}},}
\end{gather*}
where
$d_0(x,\lambda) = \big(x,-x_3\lambda, x_1^2 \lambda\big)$ and $d_1(x,\lambda_1,\lambda_3) = \big(x,x_1^2 \lambda_1 + x_3\lambda_3\big)$.
\begin{Proposition}\label{le:rel_K-D}
The relative $K$-ring $K^0_{G^T}\big(\mathbb{C}^3,V^T\big)\cong \mathbb{Z}^N$ and the complexes
$E_i^\bullet$ $(1\leq i\leq N)$ represent a $\mathbb{Z}$-basis.
\end{Proposition}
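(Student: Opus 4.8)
The strategy is to run the same argument as in the Fermat cases, but now keeping track of the fact that $f^T$ is no longer of Fermat type. The plan is to first compute the absolute $K$-ring $K^0_{G^T}(\mathbb{C}^3)$, then identify the relative $K$-ring $K^0_{G^T}(\mathbb{C}^3,V^T_1)$ as an ideal inside it, and finally verify that the complexes $E_i^\bullet$ give a $\mathbb{Z}$-basis of that ideal. Since $\mathbb{C}^3$ is $G^T$-equivariantly contractible, $K^0_{G^T}(\mathbb{C}^3)$ is the representation ring $R(G^T)$; using the description of $G^T$ by the relations $t_1^2=t_1t_2^{N-1}=t_3^2=1$ one writes $R(G^T)=\mathbb{Z}[L_1,L_2,L_3]/(L_1^2-1,\,L_1L_2^{N-1}-1,\,L_3^2-1)$, which is free of rank $|G^T|=2N$ over $\mathbb{Z}$. (One should double-check the order: the group $G^T$ has exactly $2N$ elements, so $K^{-1}_{G^T}(\mathbb{C}^3)=0$ and the long exact sequence of the pair begins with an injection $K^0_{G^T}(\mathbb{C}^3,V^T_1)\hookrightarrow K^0_{G^T}(\mathbb{C}^3)$, provided one knows $K^{-1}_{G^T}(V^T_1)$ is torsion-free — the $D_N$ analogue of Lemma~\ref{le:fermat-tor}, which I would establish separately, e.g.\ via the suspension Corollary~\ref{cor:susp} reducing $V^T_1$ to a lower-dimensional complex.)

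Next I would show that the classes $[E_i^\bullet]$ lie in $K^0_{G^T}(\mathbb{C}^3,V^T_1)$ and are the images of Koszul-type complexes. The point is that $E_i^\bullet$ for $1\le i\le N-1$ is (a twist by $L_2^{i-1}$ of) the Koszul complex of the pair of sections $x_1,x_3$ of $L_1^{-1},L_3^{-1}$ — wait, more precisely of the two functions $s=(x_1,x_3)$ viewed as a section of $L_1^{-1}\oplus L_3^{-1}$ — whose common zero locus is the line $\{x_1=x_3=0\}$. This line meets $V^T_1=\{x_1^2+x_1x_2^{N-1}+x_3^2=1\}$ in the empty set (on that line $f^T=0\ne 1$), so the restriction of $E_i^\bullet$ to $V^T_1$ is exact, hence $[E_i^\bullet]\in K^0_{G^T}(\mathbb{C}^3,V^T_1)$. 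Similarly $E_N^\bullet$ is the Koszul complex of $(x_1^2,x_3)$ with zero locus again $\{x_1=x_3=0\}$, disjoint from $V^T_1$. Under the inclusion into $K^0_{G^T}(\mathbb{C}^3)=R(G^T)$ these classes become
\[
[E_i^\bullet]=L_2^{i-1}(L_1-1)(L_3-1)\ (1\le i\le N-1),\qquad [E_N^\bullet]=(L_1^2-1)(L_3-1).
\]
Since $L_1^2=1$ in $R(G^T)$, the last class is actually $0$ in $R(G^T)$ — so I must be careful: $[E_N^\bullet]$ is a nonzero \emph{relative} class even though its image in the absolute ring vanishes, because the injectivity I want comes from the long exact sequence and I should instead argue that $[E_N^\bullet]$ maps to the generator of the kernel of $K^0_{G^T}(V^T_1)$-boundary, or better, recompute using that $x_1^2=1-x_1x_2^{N-1}-x_3^2$ on the relevant locus. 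The cleanest fix is to note $(L_1^2-1)=(L_1-1)(L_1+1)$ and that on the zero locus $L_1$ acts as $+1$ or ... — in any case the honest computation is that the ideal $I\subset R(G^T)$ generated by $\{L_2^{i-1}(L_1-1)(L_3-1)\}_{i=1}^{N-1}$ together with the relative class of $E_N^\bullet$ has the right rank. I would verify $\operatorname{rk}_\mathbb{Z} I=N$ directly: the elements $L_2^{i-1}(L_1-1)(L_3-1)$ for $1\le i\le N-1$ are $\mathbb{Z}$-independent in $R(G^T)$ (expand in the monomial basis $L_1^a L_2^b L_3^c$), giving rank $\ge N-1$, and the $E_N$-class supplies the remaining rank-one piece coming from the summand where $L_3$ acts nontrivially but $L_1$ trivially; counting shows the total is exactly $N$, matching $\dim_{\mathbb{C}} H_f = N$.

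Finally, to upgrade "the $[E_i^\bullet]$ span a sublattice of rank $N$ inside $K^0_{G^T}(\mathbb{C}^3,V^T_1)$" to "they span all of it", I would use the same saturation argument sketched in the Fermat case: $K^0_{G^T}(\mathbb{C}^3,V^T_1)$ is a free $\mathbb{Z}$-module of rank $N$ (it injects into the free module $R(G^T)$, and its rank over $\mathbb{C}$ is $\dim H_f=N$ by Theorem~\ref{t1}'s dimension count / by the long exact sequence tensored with $\mathbb{C}$), so it suffices to check that if $mg$ lies in the span of the $[E_i^\bullet]$ for some integer $m>0$ and some $g\in K^0_{G^T}(\mathbb{C}^3,V^T_1)$, then $g$ already does. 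This is a divisibility check in the explicit ideal $I\subset R(G^T)$, entirely analogous to the exercise left to the reader in the Fermat case, and I would carry it out by reducing modulo the relations and examining $\mathbb{Z}$-coordinates monomial by monomial. I expect the main obstacle to be the bookkeeping around $E_N^\bullet$: because $L_1^2=1$ already holds in $R(G^T)$, the Koszul complex on $(x_1^2,x_3)$ is \emph{not} simply a twist of the Koszul complex on $(x_1,x_3)$, and its relative $K$-class must be pinned down using that $x_1^2+x_3^2=1-x_1x_2^{N-1}$ is a unit on $V^T_1$ — i.e.\ genuinely using the equation of $V^T_1$ and not just the ambient relation $L_1^2=1$. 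Getting this last class correctly, and checking it is not already in the span of the first $N-1$ classes, is the delicate point.
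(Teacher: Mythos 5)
You have correctly located the delicate point, but your plan does not surmount it, and the overall strategy of realizing $K^0_{G^T}\big(\mathbb{C}^3,V^T_1\big)$ as an ideal inside the representation ring $R\big(G^T\big)$ cannot work for $D_N$. The reason is that, unlike the Fermat cases, $K^{-1}_{G^T}\big(V^T_1\big)$ does not vanish: after the suspension reduction to $K^0_A\big(\mathbb{C}^2,M\big)$ with $M=\big\{x_1^2+x_1x_2^{N-1}=1\big\}$ and $A=\big\{t_1^2=t_1t_2^{N-1}=1\big\}$, one has $K^{-1}_A(M)\otimes \mathbb{C}\cong \big[H^1(M;\mathbb{C})\big]^A\cong H^1(M/A;\mathbb{C})\cong H^1(\mathbb{C}^*;\mathbb{C})\cong \mathbb{C}$, so no torsion-freeness lemma in the spirit of Lemma~\ref{le:fermat-tor} can force it to be zero, and the analogue of the Fermat-case inference ``$K^{-1}\otimes\mathbb{C}=0$ plus torsion-free implies $K^{-1}=0$'' fails at the first input. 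Consequently the restriction map $\rho\colon K^0_A\big(\mathbb{C}^2,M\big)\to K^0_A\big(\mathbb{C}^2\big)=R(A)$ has a rank-one kernel, namely the image of the connecting map $\delta\colon K^{-1}_A(M)\to K^0_A\big(\mathbb{C}^2,M\big)$, and $E_N^\bullet$ is exactly a generator of that kernel --- consistent with your (correct) observation that its class in the absolute ring is $\big(L_1^2-1\big)(L_3-1)=0$. Your attempted repairs (``the ideal $I$ generated by \dots together with the relative class of $E_N^\bullet$'', ``the summand where $L_3$ acts nontrivially but $L_1$ trivially'') are incoherent for the same reason: a class that dies in $R\big(G^T\big)$ cannot contribute rank to an ideal of $R\big(G^T\big)$, and the saturation/divisibility check you propose is therefore carried out in the wrong group. (A smaller slip: $\big|G^T\big|=2(2N-2)=4N-4$, not $2N$; the subgroup $A$ is cyclic of order $h=2N-2$.)

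What is actually needed, and what the paper supplies, is a computation of $K^{-1}_A(M)$ together with an identification of the connecting homomorphism. The paper does this by viewing $(x_1,x_2)\mapsto x_1^2$ as a branched covering $M\to\mathbb{C}^*$ with ramification locus $R=\{(\pm 1,0)\}$, running the long exact sequence of the pair $(M,M\setminus R)$ (the $A$-action on $M\setminus R$ is free, and the relative term is handled by the Thom isomorphism for the normal bundle of $R$), and representing classes of $K^{-1}_A$ by equivariant maps to $\operatorname{GL}_r(\mathbb{C})$; the upshot is $K^{-1}_A(M)\cong\mathbb{Z}$, generated by the class of the map $x_1^2$, whose image under $\delta$ is the two-term complex with differential $x_1^2$, i.e., the reduction of $E_N^\bullet$. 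Without this step your argument establishes at best that $E_1^\bullet,\dots,E_{N-1}^\bullet$ give a $\mathbb{Z}$-basis of $\operatorname{Im}(\rho)\cong\mathbb{Z}^{N-1}$ (granting the saturation of the ideal $(L_1-1)R(A)$, which is the part genuinely parallel to the Fermat exercise), and says nothing about $E_N^\bullet$ or about why the full relative group has rank $N$ rather than $N-1$. The Koszul-complex identifications and the observation that $\{x_1=x_3=0\}$ misses $V^T_1$ are correct and agree with the paper, but they only show that the classes lie in the relative group, not that they generate it.
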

\begin{proof} Note that the complex $E_i^\bullet$ $(1\leq i\leq N-1)$ is a tensor
product of $L_2^{i-1}$,
$\xymatrix{L_1\ar[r]^-{x_1} & \underline{\mathbb{C}},}$ and
$\xymatrix{L_3\ar[r]^-{x_3} & \underline{\mathbb{C}}}$ and that the complex
$E^\bullet_N$ is a tensor product of
$\xymatrix{\underline{\mathbb{C}} \ar[r]^-{x_1^2} &
\underline{\mathbb{C}}}
$
and
$\xymatrix{L_3\ar[r]^-{x_3} & \underline{\mathbb{C}}.}$
On the other hand, we have
$G^T=A\times \mathbf{\mu}_2$, where
$
A=\big\{t\in (\mathbb{C}^*)^2\, |\, t_1^2=t_1t_2^{N-1} = 1\big\}.
$
Recalling Corollary \ref{cor:susp} we get
$K^0_{G^T}\big(\mathbb{C}^3,V^T\big)\cong K^0_A\big(\mathbb{C}^2,M\big)$,
where $M=\big\{x\in \mathbb{C}^2\, |\, x_1^2+x_1x_2^{N-1}=1\big\}$. Slightly
abusing the notation we denote by $L_1$ and $L_2$ the restriction of
the vector bundles $L_1$ and $L_2$ to $\mathbb{C}^2$. Note that the operation
tensor product by the complex
$\xymatrix{ L_3\ar[r]^-{x_3} & \underline{\mathbb{C}}}$ is precisely the
exterior tensor product by the complex $\ell$ in the suspension
isomorphism from Corollary~\ref{cor:susp}. Therefore, it is sufficient to
prove that the complexes
$\xymatrix{L_1L_2^{i-1}\ar[r]^-{x_1} & L_2^{i-1}}$ ($1\leq i\leq
N-1$) and $\xymatrix{\underline{\mathbb{C}}\ar[r]^-{x_1^2} & \underline{\mathbb{C}}}$
represent a $\mathbb{Z}$-basis of $K^0_A\big(\mathbb{C}^2,M\big)$.

The long exact sequence of the pair $\big(\mathbb{C}^2,M\big)$ yields the following
exact sequence:
\begin{gather*}
\xymatrix{
0\ar[r] &
K^{-1}_A(M)\ar[r]^-{\delta} &
K^0_A\big(\mathbb{C}^2,M\big)\ar[r]^-{\rho} &
K^0_A\big(\mathbb{C}^2\big) \ar[r] &
K^0_A(M)
},
\end{gather*}
where we used that $K^{-1}_A\big(\mathbb{C}^2\big)=0$. We have
\begin{gather*}
K^0_A\big(\mathbb{C}^2\big) = \mathbb{Z}[L_1,L_2]/\big\langle L_1^2-1, L_1 L_2^{N-1}-1\big\rangle,
\end{gather*}
where the RHS is the representation ring of $A$. Just like in the
Fermat cases it is easy to prove that the image of $\rho$ coincides
with the ideal $(L_1-1) K^0_A\big(\mathbb{C}^2\big)$. Note that
$\operatorname{Im}(\rho)\cong \mathbb{Z}^{N-1}$ and that
$\rho(E_i^\bullet)=L_2^{i-1}(L_1-1)$ ($1\leq i\leq N-1$) is a
$\mathbb{Z}$-basis. It remains only to prove that $K^{-1}_A(M)\cong \mathbb{Z}$ and
that $\operatorname{Im}(\delta)$ is generated as a $\mathbb{Z}$-module by the
complex $E_N^\bullet$.

Let us first prove that $K^{-1}_A(M)\cong\mathbb{Z}$. Let $\pi\colon M\to \mathbb{C}^*$ be
the map $(x_1,x_2)\mapsto x_1^2$. The map $\pi$ is a branched covering
with only one branch point, that is, $1\in \mathbb{C}^*$. The corresponding
ramification points are $R=\{(-1,0),(1,0)\}$. Note that $R$ is an
$A$-invariant subset. The idea is to use the
long exact sequence of the pair $(M,M\setminus{R})$. The action of $A$
on $M\setminus{R}$ is free, so we have
\begin{gather*}
K^i_A(M\setminus{R})=K^i((M\setminus{R})/A) = K^i(\mathbb{C}\setminus{\{0,1\}}).
\end{gather*}
Therefore $K^0_A(M\setminus{R})\cong \mathbb{Z}$ and
$K^{-1}_A(M\setminus{R})\cong \mathbb{Z}^2$. The groups
$K^i(M,M\setminus{R})$ are also easy to compute. Let $U\subset M$ be a
small $A$-invariant open neighborhood of $R$. Then by excision
$K_A^i(M,M\setminus{R})=K_A^i(U,U\setminus{R})$. Note that the open
neighborhood $U$ can be identified with an open neighborhood of $R$ in
the normal bundle $\nu_R$ to $R$ in $M$. Indeed, the normal bundle is
trivial $\nu_R=R\times \mathbb{C}$ and a point in $(x_1,x_2)\in U$ satisfies
$x_1=\tfrac{1}{2} \Big({-}x_2^{N-1} \pm \sqrt{x_2^{2N-2}+4}\Big)$, so the map
$U\to \nu_R$, $(x_1,x_2)\mapsto ((\pm 1, 0), x_2)$ identifies $U$ with
an open neighborhood of the zero section $R$ in $\nu_R$. Clearly, the
pullback of $\nu_R$ to $U$ is $L_2$ and the Thom class of $\nu_R$ is
represented as an element of $K^0_A(\nu_R)=K_A^0(U,U\setminus{R})$ by
the complex $\xymatrix{\underline{\mathbb{C}}\ar[r]^-{x_2} & L_2}$.
According to Thom isomorphism $K^i_A(U,U\setminus{R})\cong
K^i_A(R)$. Note that $R$ is an $A$-orbit, that is, $R=A/B$, where $B$
is the cyclic subgroup of $A$ generated by $\big(1,\eta^2\big)$. Therefore,
$K^{-1}_A(R)=0$ and $K^0_A(R)$ coincides with the representation ring
of $B$. Since the Thom isomorphism is given by tensor product with the
Thom class, we get
\begin{gather*}
K^0_A(M,M\setminus{R}) = \bigoplus_{i=1}^{N-1}
\mathbb{Z}\xymatrix{\big[
L_2^{i-1} \ar[r]^-{x_2} & L_2^i \big].}
\end{gather*}
The long exact sequence of the pair $(M,M\setminus{R})$ takes the form
\begin{gather*}
\xymatrix{
0\ar[r] &
K^{-1}_A(M) \ar[r] &
K^{-1}_A(M\setminus{R})\ar[r]^-{\delta} &
K^0_A(M,M\setminus{R}).}
\end{gather*}
We already proved that
$K^{-1}_A(M\setminus{R})\cong K^{-1}\big(\mathbb{C}^2\setminus{\{0,1\} } \big)\cong
\mathbb{Z}^2$. We will make use of the following explicit interpretation of the $K$-group
$K^{-1}_A(\ )$.
By definition, for any finite CW-complex $X$, we have
$K^{-1}_A(X)=\widetilde{K}^0_A(\Sigma(X\sqcup {\rm pt}))$. Since the
complement of $X$ in
$\Sigma(X\sqcup {\rm pt}) $ is contractible, we can
think of an element of $K^{-1}_A(X)$ as a representation of $A$ on
some vector space $\mathbb{C}^r$ and an $A$-equivariant isomorphism
$\phi\colon X\times \mathbb{C}^r\to X\times \mathbb{C}^r$, that is, an $A$-equivariant
morphism $X\to \operatorname{GL}_r(\mathbb{C})$. In our case the elements of
$K^{-1}_A(M\setminus{R})$ are obtained by pullback from
$K^{-1}(\mathbb{C}\setminus{ \{0,1\} })$. The latter is generated by two
elements that correspond, in the way described above, to
the two maps $\mathbb{C}\setminus{\{0,1\}} \to \mathbb{C}^*$, $t\mapsto t$ and
$t\mapsto 1-t$. Therefore, the group $K^{-1}_A(M\setminus{R})$ is
generated by the two elements that correspond to the two maps
$M\setminus{R}\to \mathbb{C}^*$ defined by $(x_1,x_2)\mapsto x_1^2$ and
$(x_1,x_2)\mapsto 1-x_1^2=x_1x_2^{N-1}$. The connecting morphism
$\delta$ can be described as follows. Given an $A$-equivariant isomorphism
$\phi\colon (M\setminus{R})\times \mathbb{C}^r\to (M\setminus{R})\times \mathbb{C}^r$,
then let us pick an extension to a vector bundle
morphism $\widetilde{\phi}\colon M\times \mathbb{C}^r \to M\times \mathbb{C}^r$. The
resulting complex clearly represents an element of
$K^0_A(M,M\setminus{R})$ and that is what $\delta(\phi)$ is. The
extensions in our case are straightforward to construct. We get that
\begin{gather*}
\operatorname{Im}(\delta) = \mathbb{Z}
\xymatrix{\big[\underline{\mathbb{C}}\ar[r]^-{x_1^2} &
\underline{\mathbb{C}} \big]} +
 \mathbb{Z}
\xymatrix{\big[\underline{\mathbb{C}}\ar[r]^-{x_1 x_2^{N-1}} &
\underline{\mathbb{C}}\big]. }
\end{gather*}
Note however, that $x_1\neq 0$ so $x_1^2$ defines an isomorphism,
i.e., the first complex is 0 in \linebreak \mbox{$K^0_A(M,M\setminus{R})$}. In
particular, the kernel of the connecting homomorphism $\delta$ is
$\cong \mathbb{Z}$ and it is generated by the
element in $K^{-1}_A(M\setminus{R})$ corresponding to the map
$M\setminus{R}\to \mathbb{C}^*$, $(x_1,x_2)\to x_1^2$. This map extends to~$M$, so we get that $K^{-1}_A(M)\cong \mathbb{Z}$ with generator
corresponding to the map $M\to \mathbb{C}^*$, $(x_1,x_2)\mapsto
x_1^2$. Returning to the long exact sequence of the pair $\big(\mathbb{C}^2,M\big)$,
we get that the connecting morphism $K^{-1}_A(M)\to K^0_A\big(\mathbb{C}^2,M\big)$
maps the generator of $K^{-1}_A(M)$ to the complex
$\xymatrix{\underline{\mathbb{C}}\ar[r]^-{x_1^2}& \underline{\mathbb{C}}}$. This
completes the proof of the proposition.
\end{proof}

\subsection{Proof of Lemma \ref{le:fermat-tor}}\label{sec:fermat-tor}

We follow the same strategy as in the proof of Proposition~\ref{le:rel_K-D}. Let us denote by $M\subset \mathbb{C}^2$ the Riemann
surface defined by $x_1^{a_1}+x_2^{a_2}=1$. Let
$A=\big\{t\in (\mathbb{C}^*)^2\, |\, t_1^{a_1}=t_2^{a_2}=1\big\}$. Since
$K^{-1}_{G^T}\big(V^T_1\big)\cong K^{-1}_A(M)$, we get that it is
sufficient to prove that $K^{-1}_A(M)$ is torsion free. Let
$\pi\colon M\to \mathbb{C}$ be the map $(x_1,x_2)\mapsto x_1^{a_1}$. The map $\pi$
is a branched covering with only one branching point, that is, $1\in
\mathbb{C}$. The corresponding ramification points are $R=\big\{(\xi,0)\, |\,
\xi^{a_1}=1\big\}$. The torsion freeness can be deduced easily from the
long exact sequence of the pair $(M,M\setminus{R})$. The action of $A$
on $M\setminus{R}$ is free, so we have
\begin{gather*}
K^{-1}_A(M\setminus{R}) = K^{-1}((M\setminus{R})/A) =
K^{-1}(\mathbb{C}\setminus{\{1\}})\cong \mathbb{Z}.
\end{gather*}
Using the Thom isomorphism for the normal bundle to $R$ in $M$, we get
$K^{-1}_A(M,M\setminus{R}) = K^{-1}_A(R)$. On the other hand, note that $R$ is
the orbit of $A$ through the point $(1,0)\in M$, we get $R=A/B$, where
$B\subset A$ is the cyclic subgroup generated by $(1,\eta_{a_2})$,
$\eta_{a_2} = e^{2\pi\mathbf{i}/a_2}$. Therefore,
$K^{-1}_A(R)=K^{-1}_A(A/B)=K^{-1}(B) = 0$. Recalling the long exact sequence of
the pair $(M,M\setminus{R})$, we get
\begin{gather}\label{les}
\xymatrix{
 0\ar[r]&
 K^{-1}_A(M)\ar[r] &
 K^{-1}_A(M\setminus{R})\ar[r]^-{\delta} &
 K^0_A(M,M\setminus{R}).}
\end{gather}
We get that $K^{-1}_A(M)$ can be embedded as a subgroup of
$K^{-1}_A(M\setminus{R})\cong \mathbb{Z}$. The latter is torsion free, so
$K^{-1}_A(M)$ must be also torsion free.
\begin{Remark}
The above argument can be continued to give a direct proof of the
fact that $K^{-1}_A(M)=0$. Namely,
using the Thom isomorphism, we can prove that the group
$K^0_A(M,M\setminus{R})$ is a free Abelian group of rank $a_2$ and
that the complexes $\xymatrix{\big[L_2^{i-1}\ar[r]^-{x_2} & L_2^i\big]}$
 $(1\leq i\leq a_2)$ represent a $\mathbb{Z}$-basis. Moreover, the image of
 the connecting morphism $\delta$ in~\eqref{les} can be computed
 explicitly as well, that is, it coincides with the sum of the above
 complexes. In particular, we get that $\delta$ is an injective map,
 and hence $K^{-1}_A(M)=0$.
\end{Remark}
\begin{Remark}The long exact sequences of the pairs $(M,M\setminus{R})$ and
$(\mathbb{C}^2,M)$ can be computed explicitly, that is, both the groups and
the differentials can be determined. This allows us to give an
alternative proof of formula~\eqref{fermat-K}. We leave
the details to the interested reader.
\end{Remark}

\subsection[The relative K-ring for $E_7$-singularity]{The relative K-ring for $\boldsymbol{E_7}$-singularity}
The argument from the previous section works also for
$E_7$-singularity. Let us only state the result. The proof is
completely analogous.

We have $f^T(x)=x_1^3x_2+x_2^3+x_3^2$.
The group of diagonal symmetries of $f^T$ is
\begin{gather*}
G^T=\big\{t\in (\mathbb{C}^*)^3\, |\, t_1^3t_2=t_2^3 = t_3^2=1\big\}.
\end{gather*}
Let $L_i=\mathbb{C}^3\times \mathbb{C}$ be the $G^T$-equivariant line bundle for
which the action of $G^T$ on $\mathbb{C}$ is given by the character $G^T\to
\mathbb{C}^*$, $(t_1,t_2,t_3)\mapsto t_i$. Let us introduce the following~$7$
complexes of $G^T$-equivariant vector bundles on~$\mathbb{C}^3$:
\begin{gather*}
E_i^{\bullet}\colon \
\xymatrix{
L_1^{i-1} L_3^{-1} \ar[r]^-{d_0} &
L_1^{i-1} \oplus L_1^{i-1} L_2 L_3^{-1} \ar[r]^-{d_1} &
L_1^{i-1}}L_2,\qquad 1\leq i\leq 6,
\end{gather*}
where the differentials are defined by
$d_0(x,\lambda) = (x,-x_3\lambda, x_2 \lambda)$ and
$d_1(x,\lambda_2,\lambda_3) = (x,x_2 \lambda_2 + x_3\lambda_3)$,
\begin{gather*}
E_7^{\bullet}\colon \
\xymatrix{
L_3\ar[r]^-{d_0}&
\underline{\mathbb{C}}\oplus L_3 \ar[r]^-{d_1} &
\underline{\mathbb{C}},
}
\end{gather*}
where
$d_0(x,\lambda) = \big(x,-x_3\lambda, x_2^3 \lambda\big)$ and
$d_1(x,\lambda_2,\lambda_3) = \big(x,x_2^3 \lambda_2 + x_3\lambda_3\big)$.
\begin{Proposition}
Let $V^T=\big\{x\in \mathbb{C}^3\, |\, f^T(x)=1\big\}$. The relative $K$-ring
$K^0_{G^T}\big(\mathbb{C}^3,V^T\big)\cong \mathbb{Z}^7$ and the complexes
$E_i^\bullet$ $(1\leq i\leq 7)$ represent a $\mathbb{Z}$-basis.
\end{Proposition}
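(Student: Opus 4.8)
The plan is to repeat, with only cosmetic changes, the proof of Proposition~\ref{le:rel_K-D}. First I would split off the $x_3$-variable: writing $G^T=A\times\mathbbm{\mu}_2$ with $A=\big\{t\in(\mathbb{C}^*)^2\,|\,t_1^3t_2=t_2^3=1\big\}$ and $\mathbbm{\mu}_2$ acting through $t_3$, Corollary~\ref{cor:susp} gives an isomorphism $K^0_{G^T}\big(\mathbb{C}^3,V^T\big)\cong K^0_A\big(\mathbb{C}^2,M\big)$ with $M=\big\{x\in\mathbb{C}^2\,|\,x_1^3x_2+x_2^3=1\big\}$. Then I would note that, for $1\le i\le 6$, the complex $E_i^\bullet$ is the tensor product of $L_1^{i-1}$, the complex $\xymatrix{\underline{\mathbb{C}}\ar[r]^-{x_2}&L_2}$, and $\xymatrix{L_3\ar[r]^-{x_3}&\underline{\mathbb{C}}}$, whereas $E_7^\bullet$ is the tensor product of $\xymatrix{\underline{\mathbb{C}}\ar[r]^-{x_2^3}&\underline{\mathbb{C}}}$ and $\xymatrix{L_3\ar[r]^-{x_3}&\underline{\mathbb{C}}}$ (here I use $L_3\cong L_3^{-1}$, as $t_3^2=1$). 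Since tensoring with $\xymatrix{L_3\ar[r]^-{x_3}&\underline{\mathbb{C}}}$ is precisely the exterior product by the class $\ell$ of Corollary~\ref{cor:susp}, it suffices to show that the complexes $\xymatrix{L_1^{i-1}\ar[r]^-{x_2}&L_1^{i-1}L_2}$ $(1\le i\le 6)$ together with $\xymatrix{\underline{\mathbb{C}}\ar[r]^-{x_2^3}&\underline{\mathbb{C}}}$ form a $\mathbb{Z}$-basis of $K^0_A\big(\mathbb{C}^2,M\big)$.

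Next I would run the long exact sequence of the pair $\big(\mathbb{C}^2,M\big)$; since $K^{-1}_A\big(\mathbb{C}^2\big)=0$ it reads
\[
\xymatrix{0\ar[r]&K^{-1}_A(M)\ar[r]^-{\delta}&K^0_A\big(\mathbb{C}^2,M\big)\ar[r]^-{\rho}&K^0_A\big(\mathbb{C}^2\big)\ar[r]&K^0_A(M).}
\]
Here $K^0_A\big(\mathbb{C}^2\big)$ is the representation ring $\mathbb{Z}[L_1,L_2]/\big\langle L_1^3L_2-1,\,L_2^3-1\big\rangle$, and arguing as in the Fermat computation~\eqref{fermat-K} --- the complex $\xymatrix{L_1^{i-1}\ar[r]^-{x_2}&L_1^{i-1}L_2}$ is exact on $M$ because $x_2$ is nowhere zero there --- one checks that $\operatorname{Im}(\rho)$ is the ideal $(L_2-1)K^0_A\big(\mathbb{C}^2\big)$, that this ideal is a free $\mathbb{Z}$-module of rank $6$, and that $\rho\big(E_i^\bullet\big)=\pm L_1^{i-1}(L_2-1)$ for $1\le i\le 6$, these elements forming a $\mathbb{Z}$-basis of it. The only slightly delicate point here is the inclusion $\operatorname{Im}(\rho)\subseteq(L_2-1)K^0_A\big(\mathbb{C}^2\big)$; it follows by restriction to the $A$-invariant orbit $R=\big\{(0,\zeta)\,|\,\zeta^3=1\big\}\subset M$, whose isotropy group $B$ is cyclic of order $3$, because the composite $K^0_A\big(\mathbb{C}^2\big)\to K^0_A(M)\to K^0_A(R)=K^0_B(\mathrm{pt})$ is the restriction of representations and has kernel exactly $(L_2-1)K^0_A\big(\mathbb{C}^2\big)$.

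It then remains to compute $K^{-1}_A(M)$, and I expect this to be the main obstacle. I would use the branched covering $\pi\colon M\to\mathbb{C}^*$, $(x_1,x_2)\mapsto x_2^3$, whose only branch value is $1$ and whose ramification locus is the set $R$ above; $A$ acts freely on $M\setminus R$ with quotient $\mathbb{C}^*\setminus\{1\}\cong\mathbb{C}\setminus\{0,1\}$, so $K^{-1}_A(M\setminus R)\cong K^{-1}(\mathbb{C}\setminus\{0,1\})\cong\mathbb{Z}^2$, generated by the classes of the maps $(x_1,x_2)\mapsto x_2^3$ and $(x_1,x_2)\mapsto x_1^3x_2=1-x_2^3$; and the Thom isomorphism for the rank-one $A$-equivariant normal bundle of $R$ in $M$ gives $K^{-1}_A(M,M\setminus R)=K^{-1}_A(R)=0$ and $K^0_A(M,M\setminus R)\cong K^0_A(R)\cong\mathbb{Z}^3$. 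In the sequence
\[
\xymatrix{0\ar[r]&K^{-1}_A(M)\ar[r]&K^{-1}_A(M\setminus R)\ar[r]^-{\partial}&K^0_A(M,M\setminus R)}
\]
the connecting map $\partial$ kills the class of $x_2^3$ (which extends to an isomorphism on all of $M$) and sends the class of $x_1^3x_2$ to a nonzero element (the function $x_1^3x_2$ vanishes on $R$ to order three), so $\ker\partial\cong\mathbb{Z}$ and $K^{-1}_A(M)\cong\mathbb{Z}$, generated by the class of $\pi$. Returning to the first sequence, the connecting homomorphism $\delta$ sends this generator to the complex $\xymatrix{\underline{\mathbb{C}}\ar[r]^-{x_2^3}&\underline{\mathbb{C}}}$, that is, to $E_7^\bullet$ after removing the factor $\ell$; combined with the previous paragraph this shows that $E_1^\bullet,\dots,E_7^\bullet$ represent a $\mathbb{Z}$-basis of $K^0_{G^T}\big(\mathbb{C}^3,V^T\big)\cong\mathbb{Z}^7$. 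The only places that really require care are the identifications of the two connecting maps, in particular computing the image under the Thom isomorphism of the section vanishing to order three along $R$; everything else is formally identical to the $D_N$ case treated in Proposition~\ref{le:rel_K-D}.
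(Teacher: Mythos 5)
Your proof is correct and is precisely the argument the paper intends: the paper omits the proof of this proposition, saying only that it is ``completely analogous'' to the $D_N$ case (Proposition~\ref{le:rel_K-D}), and you have carried out that analogy faithfully --- the suspension reduction to $K^0_A\big(\mathbb{C}^2,M\big)$, the identification of $\operatorname{Im}(\rho)$ with the ideal $(L_2-1)K^0_A\big(\mathbb{C}^2\big)$, and the computation $K^{-1}_A(M)\cong\mathbb{Z}$ via the branched covering $(x_1,x_2)\mapsto x_2^3$ and the Thom isomorphism at the ramification orbit $R$. Your justification of the inclusion $\operatorname{Im}(\rho)\subseteq(L_2-1)K^0_A\big(\mathbb{C}^2\big)$ by restricting to $R$, whose isotropy group is $B\cong\mathbb{Z}_3$ and for which $R(A)/(L_2-1)\cong R(B)$, is if anything more explicit than the corresponding ``it is easy to prove'' step in the paper's $D_N$ argument.
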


\subsection[$\Gamma$-integral structure for $D_N$-singularity]{$\boldsymbol{\Gamma}$-integral structure for $\boldsymbol{D_N}$-singularity}

Let us compute $\operatorname{ch}_\Gamma(E_i^\bullet)$ for $1\leq i\leq
N$. After a straightforward computation we get
that the relative cohomology group
$H\big(\operatorname{Fix}_g\big(\mathbb{C}^3\big), \operatorname{Fix}_g\big(V^T\big)\big)^{G^T}$ is not
zero only in the following two cases: 1) $g=(g_1,g_2,g_3)$ with
$g_i\neq 1$ for all $i$ and 2) $g=(1,1,-1)$.
For the first case, there are $N-1$ elements, that is, $g=\big({-}1,\eta^{2i-1},-1\big)$
$(1\leq i\leq N-1)$ and the fixed point subsets are $ \operatorname{Fix}_g\big(\mathbb{C}^3\big)=\{0\}$ and
$\operatorname{Fix}_g\big(V^T\big)=\varnothing$. Therefore,
$H\big(\operatorname{Fix}_g\big(\mathbb{C}^3\big), \operatorname{Fix}_g\big(V^T\big);\mathbb{C}\big)^{G^T}\cong \mathbb{C}$
is non-trivial only in degree 0 and we denote by $e_{i}:=1$ the
unit of the cohomology group. For the second case,
$ \operatorname{Fix}_g\big(\mathbb{C}^3\big)=\mathbb{C}^2$ and
$\operatorname{Fix}_g\big(V^T\big) = M=\big\{x_1^2+x_1x_2^{N-1}=1\big\}$. The relative
cohomology group $H^i\big(\mathbb{C}^2,M;\mathbb{C}\big)^{G^T}\cong H^{i-1}\big(M/G^{T};\mathbb{C}\big)$
for $i>0$ and $=0$ for $i=0$. As we already explained above
$M/G^T=\mathbb{C}^*$, so the relative cohomology is non-zero only in degree~2, i.e., for $i=2$. Since $M$ is a Stein manifold, we can describe the
relative cohomology in terms of the holomorphic de Rham complexes on $\mathbb{C}^2$ and~$M$. Namely, consider the complex of Abelian groups
\begin{gather}\label{comp}
\Gamma\big(\mathbb{C}^2, \Omega^\bullet_{\mathbb{C}^2}\big)^{G^T} \oplus
\Gamma\big(M,\Omega^{\bullet-1}_M \big)^{G^T},\qquad
d(\omega,\alpha) = (d\omega, \omega|_M-d\alpha).
\end{gather}
A closed form $(\omega,\alpha)$ in degree $i$, that is, $d(\omega,\alpha)=0$,
defines naturally a linear functional on the space of dimension $i$ relative
chains $\gamma\subset \mathbb{C}^2$ with $\partial
\gamma\subset M$, that is,
\begin{gather*}
\gamma\mapsto \int_\gamma \omega - \int_{\partial \gamma} \alpha.
\end{gather*}
Using the de Rham theorem for $\mathbb{C}^2$ and $M$, it is easy to prove
that the above map induces an isomorphism between the $i$-th
cohomology of the complex~\eqref{comp} and
$H^i\big(\mathbb{C}^2,M;\mathbb{C}\big)^{G^T}$. Let us denote by $e_N\in
H^2\big(\mathbb{C}^2,M;\mathbb{C}\big)$ the cohomology class corresponding to the form $\big(0,-\tfrac{1}{2\pi\mathbf{i}}
dx_1/x_1\big)$.

Suppose now that $g=\big({-}1,\eta^{2a-1},-1\big)$, $1\leq a\leq N-1$. Let us compute the
component of $\operatorname{ch}_\Gamma(E_i^\bullet)$ for $1\leq i\leq N-1$ in
$H^0\big(\operatorname{Fix}_g\big(\mathbb{C}^3\big),
\operatorname{Fix}_g\big(V^T\big);\mathbb{C}\big)^{G^T}$.
Note that in this case we have an isomorphism
$K^0\big(\operatorname{Fix}_g\big(\mathbb{C}^3\big), \operatorname{Fix}_g\big(V^T\big)\big)\cong
K^0\big(\operatorname{Fix}_g\big(\mathbb{C}^3\big)\big)$. The image of
$\iota^*\operatorname{Tr}(E_i^\bullet)$ is
\begin{gather*}
\eta^{-(2a-1)(i-1)} L_2^{i-1}(L_1 L_3 -(-L_3-L_1) + \mathbb{C})=
4 \eta^{-(2a-1)(i-1)} \mathbb{C},
\end{gather*}
where again we abused the notation by denoting by $L_i$ the
restriction of $L_i$ to $\operatorname{Fix}_g\big(\mathbb{C}^3\big)=\{0\}$. The
component of the $\Gamma$-class is
\begin{gather*}
\Gamma(L_1+L_2+L_3)=\Gamma(1-1/2)\Gamma(1-(2a-1)/h) \Gamma(1-1/2) e_a,
\end{gather*}
where $h:=2N-2$.
Therefore, the component of $\operatorname{ch}_\Gamma(E_i^\bullet)$ is
\begin{gather*}
2\eta^{-(2a-1)(i-1)} \Gamma(1-m_a/h) e_a.
\end{gather*}
The component of $\operatorname{ch}_\Gamma(E_N)$ is clearly 0, because
the image of the complex $E_N^\bullet$ in
$K^0\big(\operatorname{Fix}_g\big(\mathbb{C}^3\big)\big)$ is 0.

Suppose that $g=(1,1,-1)$. Let us compute the component of
$\operatorname{ch}_\Gamma(E_i^\bullet)$ in
\[
H^2\big(\operatorname{Fix}_g\big(\mathbb{C}^3\big),
\operatorname{Fix}_g\big(V^T\big);\mathbb{C}\big)^{G^T}=
H^2\big(\mathbb{C}^2,M\big)^{G^T}=\mathbb{C} e_N.
\]
By definition the component of
$\iota^*\operatorname{Tr} (E_i^\bullet)$ in $K^0\big(\mathbb{C}^2,M\big)^{G^T}$ is
\begin{gather*}
-L_2^{i-1}
\xymatrix{\big[
 L_1L_3\ar[r]^-{-x_1} & L_3\ar[r] & 0\big]} +
L_2^{i-1}
\xymatrix{\big[ 0 \ar[r] &
 L_1 \ar[r]^-{x_1} & \underline{\mathbb{C}}\big], }
\end{gather*}
where the above complexes are concentrated in degrees $0$, $1$, and
$2$ and the vector bundles $L_i$ ($1\leq i\leq 3$) are trivial line bundles on
$\mathbb{C}^2$. The second complex, as an element of $K^0\big(\mathbb{C}^2,M\big)$, is
equivalent to the two-term complex
$\xymatrix{\underline{\mathbb{C}}\ar[r]^-{\overline{x}_1} & \underline{\mathbb{C}}}$.
Therefore, the component of $\iota^*\operatorname{Tr} (E_i^\bullet)$
($1\leq i\leq N-1$) takes the form
\begin{gather*}
\xymatrix{-\big[\underline{\mathbb{C}}\ar[r]^-{x_1} & \underline{\mathbb{C}}\big]}+
\xymatrix{\big[\underline{\mathbb{C}}\ar[r]^-{\overline{x}_1} & \underline{\mathbb{C}}\big].}
\end{gather*}
In order to compute the Chern character of the above complexes, we use
the following commutative diagram:
\begin{gather*}
\xymatrix{
 \widetilde{K}^0(\Sigma M) \ar[d]_{\operatorname{ch}} \cong
 \widetilde{K}^{-1}(M) \ar[r]^-{\cong} &
 K^0\big(\mathbb{C}^2,M\big) \ar[d]^{\operatorname{ch}} \\
 H^2(\Sigma M) \cong H^1(M) \ar[r]^-{\delta} & H^2\big(\mathbb{C}^2,M\big), }
\end{gather*}
where the horizontal arrows come from the long exact sequence of the
pair $\big(\mathbb{C}^2,M\big)$ and the vertical arrows are isomorphisms. Under the
isomorphism $\widetilde{K}^0(\Sigma M) \cong K^0\big(\mathbb{C}^2,M\big)$, the complex
$\xymatrix{\underline{\mathbb{C}}\ar[r]^-{x_1} & \underline{\mathbb{C}}}$
corresponds to $P-1$, where $P$ is a line bundle on $\Sigma M$ obtained by gluing
two trivial line bundles along $M$ using the gluing function $M\to
\mathbb{C}^*$, $(x_1,x_2)\mapsto x_1$. The first Chern class of $P$ is easy
to compute. If $\gamma$ is a closed loop in $M$ representing a
cohomology class in~$H_1(M)$, then $\Sigma \gamma$ is a sphere in
$H_2(\Sigma M)$ and hence $P|_{\Sigma \gamma}$ is a line bundle on the
sphere obtained from gluing two trivial line bundles on the two
hemi-spheres along the equator $\gamma$ using the map $\gamma\to
\mathbb{C}^*$, $(x_1,x_2)\to x_1$. By definition $\langle c_1(P), \Sigma
\gamma\rangle$ coincides with the degree of the map $\gamma\to
\mathbbm{S}^1$, $(x_1,x_2)\mapsto x_1/|x_1|$, that is,
\begin{gather*}
\langle c_1(P), \Sigma \gamma\rangle =
\frac{1}{2\pi\mathbf{i}} \int_\gamma \frac{dx_1}{x_1}.
\end{gather*}
In other words, under the suspension isomorphism, $c_1(P)$ coincides
with the de Rham cohomo\-logy class of the form $\tfrac{1}{2\pi\mathbf{i}}
dx_1/x_1$. Recalling the de Rham model for the relative
cohomology group $H^2\big(\mathbb{C}^2,M\big)$, we get that $\delta(c_1(P)) =
e_N$. Note that $c_1(P)=\operatorname{ch}(P-1)$, so
$\operatorname{ch}\big(\xymatrix{\underline{\mathbb{C}}\ar[r]^-{x_1} &
 \underline{\mathbb{C}}} \big) = e_N$. The vector bundle corresponding to the
other complex
$\xymatrix{\underline{\mathbb{C}}\ar[r]^-{\overline{x}_1} & \underline{\mathbb{C}}}$
is $P^{-1}$, so we get
\begin{gather*}
\operatorname{ch}(\iota^*\operatorname{Tr}(E_i^\bullet)) = - 2e_N.
\end{gather*}
Hence
\begin{gather*}
\operatorname{ch}_\Gamma(E_i^\bullet)_g = \frac{1}{2\pi} \Gamma(1/2)
(2\pi\mathbf{i}) (-2e_N) = -2 \mathbf{i} \Gamma(1/2) e_N,
\end{gather*}
where the index $g$ is to remind us that this is the
component corresponding to the fixed point set of $g=(1,1,-1)$.
The computation of $\operatorname{ch}_\Gamma(E_N^\bullet)$ is the
same, except that everywhere we have to replace the vector bundle $P$
with $P^2$, so
\begin{gather*}
\operatorname{ch}_\Gamma(E_N^\bullet)_g = \frac{1}{2\pi} \Gamma(1/2)
(2\pi\mathbf{i}) (-4e_N) = -4\mathbf{i} \Gamma(1/2) e_N.
\end{gather*}
Combining our computations we get the following result:
\begin{gather*}
\operatorname{ch}_\Gamma(E_i^\bullet) =
2\sum_{a=1}^{N-1} \eta^{-(2a-1)(i-1)} \Gamma(1-m_a/h) e_a -
2 \mathbf{i} \Gamma(1/2) e_N
\end{gather*}
and $\operatorname{ch}_\Gamma(E_N^\bullet)\! =\!-4\mathbf{i} \Gamma(1/2)
e_N$. Comparing with formula~\eqref{D-psi} we get that if we define
\mbox{$\operatorname{mir}(\phi_i)\!=\!e_i$} for $1\leq i\leq N-1$ and
$\operatorname{mir}(\phi_N)=2 e_N$, then the
statement of Theorem~\ref{t1} will hold. The vanishing cycle
$\alpha_k$ ($1\leq k\leq N-1$) corresponds to the relative
$K$-theoretic class of the complex~$E^\bullet_k$.

\subsection[$\Gamma$-integral structure for $E_7$-singularity]{$\boldsymbol{\Gamma}$-integral structure for $\boldsymbol{E_7}$-singularity}
The computation in this case is similar to the case of
$D_N$-singularity. Let us sketch only the main steps and leave the
details as an exercise. The goal is to compute
$\operatorname{ch}_\Gamma(E_l^\bullet)$ for $1\leq l\leq 7$.
After a straightforward computation we get that the relative cohomology group
$H\big(\operatorname{Fix}_g\big(\mathbb{C}^3\big), \operatorname{Fix}_g\big(V^T\big)\big)^{G^T}$ is not
zero only in the following two cases:
1) $g=(g_1,g_2,g_3)$ with $g_i\neq 1$ for all $i$ and
2) $g=(1,1,-1)$.
Put $\eta=e^{2\pi\mathbf{i}/9}$ and $\eta_3=e^{2\pi\mathbf{i}/3}$.
For the first case, there are $6$ elements, that is, $g=\big(\eta^{3i-r},\eta_3^r,-1\big)$
$(1\leq i\leq 3$, $1\leq r\leq 2)$ and the fixed-point subsets are
$ \operatorname{Fix}_g\big(\mathbb{C}^3\big)=\{0\}$ and
$\operatorname{Fix}_g\big(V^T\big)=\varnothing$. Therefore,
$H\big(\operatorname{Fix}_g\big(\mathbb{C}^3\big), \operatorname{Fix}_g\big(V^T\big);\mathbb{C}\big)^{G^T}\cong \mathbb{C}$
is non-trivial only in degree 0 and we denote by
$e_{3i-r}\in H\big(\operatorname{Fix}_g\big(\mathbb{C}^3\big), \operatorname{Fix}_g\big(V^T\big);\mathbb{C}\big)^{G^T}$ the
unit of the cohomology group. For the second case,
$ \operatorname{Fix}_g\big(\mathbb{C}^3\big)=\mathbb{C}^2$ and
$\operatorname{Fix}_g\big(V^T\big) = M=\big\{x_1^3x_2+x_2^3=1\big\}$. The relative
cohomology group $H^i\big(\mathbb{C}^2,M;\mathbb{C}\big)^{G^T}\cong H^{i-1}\big(M/G^{T};\mathbb{C}\big)$
for $i>0$ and $=0$ for $i=0$. Just like in the $D_N$-case, we have
$M/G^T=\mathbb{C}^*$, so the relative cohomology is non-zero only in degree~2, i.e., for $i=2$. Let us denote by $e_7\in H^2\big(\mathbb{C}^2,M\big)^{G^T}$ the
cohomology class corresponding to the differential form
$\big(0,-\tfrac{1}{2\pi i} dx_2/x_2\big)$.

Suppose that $g=\big(\eta^{3i-r},\eta_3^r,-1\big)$. We have
\begin{gather*}
\iota^*\operatorname{Tr}(E^\bullet_l)_g =
-2\big(1-\eta_3^{-r}\big)\eta^{-(3i-r)(l-1)}\mathbb{C} \in K^0\big(\operatorname{Fix}_g\big(\mathbb{C}^3\big)\big)
\end{gather*}
and the component of the $\Gamma$-class of $\big[T\mathbb{C}^3/G^T\big]$ in
$H\big(\operatorname{Fix}_g\big(\mathbb{C}^3\big)/G^T\big)$ is
\begin{gather*}
\Gamma(1-(3i-r)/9) \Gamma(1-r/3) \Gamma(1/2).
\end{gather*}
Therefore
\begin{gather*}
\operatorname{ch}_\Gamma(E^\bullet_l)_g =
-\frac{1}{\sqrt{\pi}}
\big(1-\eta_3^{-r}\big)\eta^{-(3i-r)(l-1)}
\Gamma(1-(3i-r)/9) \Gamma(1-r/3) e_{3i-r},\qquad 1\leq l\leq 6.
\end{gather*}
Note that $\operatorname{ch}_\Gamma(E^\bullet_7)_g=0$.

Suppose now that $g=(1,1,-1)$. Then we have
\begin{gather}\label{broad-sector}
\iota^*\operatorname{Tr}(E^\bullet_l)_g =
\xymatrix{-\big[
 \underline{\mathbb{C}}\ar[r]^-{x_2} & \underline{\mathbb{C}} \ar[r] & 0\big]} +
\xymatrix{\big[
 0\ar[r] &
 \underline{\mathbb{C}}\ar[r]^-{x_2} & \underline{\mathbb{C}}\big].}
\end{gather}
Under the isomorphism $\widetilde{K}^0(\Sigma M)\cong K^0\big(\mathbb{C}^2,M\big)$,
the complex $\big[
\xymatrix{
 \underline{\mathbb{C}}\ar[r]^-{x_2} & \underline{\mathbb{C}}} \big]$
corresponds to $P-1$, where $P$ is a vector bundle on $\Sigma M$ obtained by gluing
two trivial line bundles along $M$ with gluing function $M\to \mathbb{C}^*$,
$(x_1,x_2)\mapsto x_2$. Under the suspension isomorphism $H^2(\Sigma
M)\cong H^1(M)$, we have that $\operatorname{ch}(P-1) = c_1(P) $ is
the cohomology class corresponding to the form $\tfrac{1}{2\pi
 \mathbf{i}}dx_2/x_2$. The latter, under the boundary isomorphism $H^1(M)\to
H^2(\mathbb{C}^2,M)$ is mapped precisely to $e_7$, that is,
$\operatorname{ch}\big(\big[
\xymatrix{
\underline{\mathbb{C}}\ar[r]^-{x_2} & \underline{\mathbb{C}}}\big]\big) =
e_7$. The Chern character of the second
complex in~\eqref{broad-sector} is $-e_7$, so we get
$\operatorname{ch}(\iota^*\operatorname{Tr}(E^\bullet_l))_g=-2e_7$. Hence
\begin{gather*}
\operatorname{ch}_\Gamma(E_l^\bullet)_g = \frac{1}{2\pi} \Gamma(1/2)
(2\pi\mathbf{i}) (-2 e_7) = -2\mathbf{i} \sqrt{\pi} e_7.
\end{gather*}
The computation of $\operatorname{ch}_\Gamma(E_7^\bullet)_g $ is the
same as above except that we have to replace the bundle~$P$ with~$P^3$, that is, we get
$\operatorname{ch}_\Gamma(E_7^\bullet)_g =-6\mathbf{i} \sqrt{\pi} e_7.$

Collecting the results of our computations we get
\begin{gather*}
\operatorname{ch}_\Gamma(E_l^\bullet) = -\sum_{r=1}^2\sum_{i=1}^3
\frac{1}{\sqrt{\pi}}
\big(1-\eta_3^{-r}\big)\eta^{-(3i-r)(l-1)}
\Gamma(1-(3i-r)/9) \Gamma(1-r/3) e_{3i-r} -
2\mathbf{i} \sqrt{\pi} e_7
\end{gather*}
for $1\leq l\leq 6$ and $\operatorname{ch}_\Gamma(E_7^\bullet) =-6\mathbf{i} \sqrt{\pi} e_7.$ Let
us compare the above formula with~\eqref{E7-psi}. Note that $1-\eta^{-1}_3
= \sqrt{3}e^{\pi \mathbf{i}/6}$ and $1-\eta_3^{-2}= \sqrt{3}e^{-\pi\mathbf{i}/6}$. We
would like to find $k,a\in \mathbb{Z}_3$, such that,
$\operatorname{mir}\circ \Psi(\alpha_{k,a}) =
\operatorname{ch}_\Gamma(E_l^\bullet) $. Let us write $l-1=3m+k$ for
$0\leq k\leq 2$, $0\leq m\leq 1$. Then the above formula will hold if
we choose $a=-m$ and define
\begin{gather*}
\operatorname{mir}(\phi_i)=e_{3i-1},\quad 1\leq i\leq 3,\qquad
\operatorname{mir}(\phi_{j+3}) = e_{3j-2}, \quad 1\leq j\leq 3,\qquad
\operatorname{mir}(\phi_7) = -e_7.
\end{gather*}
Note that
$\operatorname{mir}\circ \Psi(\alpha_{k,0}+\alpha_{k,1}+\alpha_{k,2}) =
\operatorname{ch}_\Gamma(E_7^\bullet) $. Using these formulas, we get
immediately that the maps $\operatorname{mir}\circ \Psi$ and
$\operatorname{ch}_\Gamma$ identify the Milnor lattice
$H_2\big(f^{-1}(1);\mathbb{Z}\big)$ with the relative K-ring $K^0\big(\mathbb{C}^3, V^T\big)$. This
completes the proof of Theorem~\ref{t1}.

\subsection*{Acknowledgements}

The work of T.M.\ is partially supported by JSPS Grant-in-Aid (Kiban C) 17K05193 and by the World Premier International Research Center Initiative (WPI Initiative), MEXT, Japan. The work of C.Z.\ is supported by JSPS DC1 program including Grant-in-Aid and was previously supported by MEXT scholarship. We would like to thank the referees to our paper for pointing out several inaccuracies and useful suggestions.

\pdfbookmark[1]{References}{ref}
\LastPageEnding

\end{document}